\newtheorem{theorem}{Theorem}[section]
\newtheorem{conjecture}[theorem]{Conjecture}
\newtheorem{corollary}[theorem]{Corollary}
\newtheorem{definition}[theorem]{Definition}
\newtheorem{example}[theorem]{Example}
\newtheorem{lemma}[theorem]{Lemma}
\newtheorem{notation}[theorem]{Notation}
\newtheorem{proposition}[theorem]{Proposition}
\newtheorem{remark}[theorem]{Remark}
\DeclareMathOperator{\card}{card}
\newcommand{\Z}{\mathbb Z}
\newcommand{\N}{\mathbb N}
\newcommand{\fc}{\mathcal F}
\newcommand{\vs}[1]{\langle #1 \rangle}
\begin{document}

\title{Gapsets of small multiplicity}

\author{S.~Eliahou and J.~Fromentin}

\address{Shalom Eliahou, Univ. Littoral C\^ote d'Opale, EA 2597 - LMPA - Laboratoire de Math\'ematiques Pures et Appliqu\'ees Joseph Liouville, F-62228 Calais, France and CNRS, FR 2956, France} \email{eliahou@univ-littoral.fr}

\address{Jean Fromentin, Univ. Littoral C\^ote d'Opale, EA 2597 - LMPA - Laboratoire de Math\'ematiques Pures et Appliqu\'ees Joseph Liouville, F-62228 Calais, France and CNRS, FR 2956, France} \email{fromentin@math.cnrs.fr}

\begin{abstract} A \emph{gapset} is the complement of a numerical semigroup in $\N$. In this paper, we characterize all gapsets of multiplicity $m \le 4$. As a corollary, we provide a new simpler proof that the number of gapsets of genus $g$ and fixed multiplicity $m \le 4$ is a nondecreasing function of $g$. 
\end{abstract}
\maketitle

\emph{Key words and phrases.} Numerical semigroups, genus, Kunz coordinates,  gapset filtrations.

\section{Introduction}

Denote $\N=\{0,1,2,3,\dots\}$ and $\N_+=\N\setminus \{0\}=\{1,2,3,\dots\}$. For $a,b \in \Z$, let $[a,b]=\{z \in \Z \mid a \le z \le b\}$ and $[a,\infty[=\{z \in \Z \mid a \le z\}$ denote the integer intervals they span. A \emph{numerical semigroup} is a subset $S \subseteq \N$ containing~$0$, stable under addition and with finite complement in $\N$. Equivalently, it is a subset $S \subseteq \N$ of the form $S = \vs{a_1,\dots,a_n}=\N a_1 + \dots + \N a_n$ for some globally coprime positive integers $a_1,\dots,a_n$.

\smallskip
For a numerical semigroup $S \subseteq \N$, its \emph{gaps} are the elements of $\N \setminus S$, its \emph{genus} is $g=|\N \setminus S|$, its \emph{multiplicity} is $m = \min S \setminus \{0\}$, its \emph{Frobenius number} is $f = \max \Z\setminus S$, its \emph{conductor} is $c=f+1$, and its \emph{embedding dimension}, usually denoted $e$, is the least number of generators of $S$, \emph{i.e.} the least $n$ such that $S = \vs{a_1,\dots,a_n}$. Note that the conductor $c$ of $S$ satisfies $c+\N \subseteq S$, and is minimal with respect to this property since $c-1=f \notin S$.

Given $g \ge 0$, the number $n_g$ of numerical semigroups of genus $g$ is finite, as easily seen. The values of $n_g$ for $g=0,\dots,15$ are as follows:
$$
1,1,2,4,7,12,23,39,67,118,204,343,592,1001,1693,2857.
$$ 
In 2006, Maria Bras-Amor\'os made some remarkable conjectures concerning the growth of $n_g$. In particular, she conjectured~\cite{Br08} that 
\begin{equation}\label{strong conjecture}
n_g  \,\ge\, n_{g-1}+n_{g-2}
\end{equation}
for all $g \ge 2$. This conjecture is widely open.
Indeed, even the weaker inequality
\begin{equation}\label{weak conjecture}
  n_g \,\ge\,  n_{g-1}
\end{equation}
whose validity has been settled by Alex Zhai \cite{Z} for all sufficiently large~$g$, remains to be proved for all $g \ge 1$.

In that same paper, Zhai showed that `most' numerical semigroups~$S$ satisfy $c \le 3m$, where $c$ and $m$ are the conductor and multiplicity of~$S$, respectively. For a more precise statement, let us denote by $n'_g$ the number of numerical semigroups of genus $g$ satisfying $c\le 3m$. The values of $n_g'$ for $g=0,\dots,15$ are as follows:
$$
1,1,2,4,6,11,20,33,57,99,168,287,487,824,1395,2351.
$$ 

Zhai showed then that $\lim_{g \to \infty} n'_g/n_g =1$, as had been earlier conjectured by Yufei Zhao \cite{Zhao}.
In that sense, numerical semigroups satisfying $c\leq 3m$ may be considered as \emph{generic}.
 
Recently, the strong conjecture~\eqref{strong conjecture} has been established for generic numerical semigroups. Here is the precise statement, first announced at the IMNS 2018 conference in Cortona~\cite{IMNS}.
\begin{theorem}[\cite{EF2}, Theorem 6.4]
The inequalities 
$$n'_{g-1}+n'_{g-2}+n'_{g-3}\geq n'_g\geq n'_{g-1}+n'_{g-2},$$
hold for all $g\geq 3$.
\end{theorem}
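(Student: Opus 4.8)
The plan is to encode the semigroups counted by $n'_g$ by their Kunz coordinates, to observe that in this regime the Kunz inequalities all but disappear, and then to move between consecutive genera by appending or deleting a single coordinate. Recall that a numerical semigroup $S$ of multiplicity $m$ is determined by its Kunz coordinate vector $k(S)=(k_1,\dots,k_{m-1})\in\Z^{m-1}$, where $k_i\ge 1$ is fixed by $k_im+i=\min\{s\in S:s\equiv i\pmod m\}$; thus $k_i$ counts the gaps of $S$ in the residue class $i$, so $\genus(S)=\sum_{i=1}^{m-1}k_i$, and since the largest gap of $S$ is $\max_i\big((k_i-1)m+i\big)$ one sees at once that $c(S)\le 3m$ (equivalently $\depth(S)\le 3$) if and only if $k_i\le 3$ for all $i$. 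By Kunz's theorem $k(S)$ ranges over the vectors of $\Z_{\ge1}^{m-1}$ with $k_i+k_j\ge k_{i+j}$ for $i+j\le m-1$ and $k_i+k_j+1\ge k_{i+j-m}$ for $i+j\ge m+1$. The observation to exploit is that, once $k\in\{1,2,3\}^{m-1}$, the ``carry'' inequalities $k_i+k_j+1\ge k_{i+j-m}$ hold automatically (their left side is $\ge 3$), while the remaining ones fail only when $k_i=k_j=1$ and $k_{i+j}=3$. Hence $n'_g$ is the number of pairs $(m,k)$, $m\ge1$, $k\in\{1,2,3\}^{m-1}$, with $\sum_i k_i=g$, subject to the single condition
\[
(\star)\qquad k_i=k_j=1\ \Longrightarrow\ k_{i+j}\neq 3\qquad(i,j\ge1,\ i+j\le m-1).
\]
Write $\mathcal K_g$ for this set, so $n'_g=|\mathcal K_g|$. (The argument can equally be run through the gapset filtration $G=[1,m-1]\sqcup(m+\Lambda_1)\sqcup(2m+\Lambda_2)$, $\Lambda_2\subseteq\Lambda_1\subseteq[1,m-1]$, with one sum-avoidance condition; I would use whichever description the paper already provides.)

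I would then record two moves. \emph{Deleting the last coordinate}: because the hypotheses of $(\star)$ only involve sums that land in $[1,m-1]$, dropping $k_{m-1}$ from $(m,k)\in\mathcal K_g$ always yields a valid pair $(m-1,(k_1,\dots,k_{m-2}))\in\mathcal K_{g-k_{m-1}}$. \emph{Appending a coordinate $v\in\{1,2\}$}: in $(m+1,(k_1,\dots,k_{m-1},v))$ the only new instances of $(\star)$ have $i+j=m$ with $k_{i+j}=v\le2\neq3$, so they hold, giving an element of $\mathcal K_{g'+v}$ from $(m,k)\in\mathcal K_{g'}$. Appending $v=3$ is exactly the delicate case, since it would demand that no pair $i,m-i$ has $k_i=k_{m-i}=1$; this is why the upper bound has to be reached by deletion rather than insertion.

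The two inequalities then follow. For the lower bound, the maps $\alpha\colon\mathcal K_{g-1}\to\mathcal K_g$, $(m,k)\mapsto(m+1,(k,1))$, and $\beta\colon\mathcal K_{g-2}\to\mathcal K_g$, $(m,k)\mapsto(m+1,(k,2))$, are well defined by the appending move, injective (delete to invert), and have disjoint images, distinguished by the last coordinate; by the deletion move $\im\alpha=\{(m,k)\in\mathcal K_g:k_{m-1}=1\}$ and $\im\beta=\{(m,k)\in\mathcal K_g:k_{m-1}=2\}$, whence $n'_g\ge n'_{g-1}+n'_{g-2}$. For the upper bound, the members of $\mathcal K_g$ outside $\im\alpha\cup\im\beta$ are precisely those with $k_{m-1}=3$, and deleting their last coordinate embeds them in $\mathcal K_{g-3}$; so $n'_g=n'_{g-1}+n'_{g-2}+|\{(m,k)\in\mathcal K_g:k_{m-1}=3\}|\le n'_{g-1}+n'_{g-2}+n'_{g-3}$. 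The condition $g\ge 3$ is needed only so that $\mathcal K_{g-3}$ makes sense, and the degenerate case $m=1$ ($S=\N$, empty Kunz vector) arises only in genus $0$ and causes no difficulty.

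The only real work is the first paragraph: setting up the dictionary, checking that $c\le 3m\Leftrightarrow k_i\le3$ for every $i$, and — the crux — noticing that for vectors with entries in $\{1,2,3\}$ the Kunz inequalities collapse to $(\star)$ alone, the ``carry'' inequalities being free. That collapse is exactly what makes the deletion and appending moves legitimate; granting it, the two Fibonacci-type inequalities are the one-paragraph bookkeeping above, and the slack in the upper bound is visibly carried by the genus $g-3$ semigroups having a complementary pair of $1$'s, which cannot receive an appended $3$.
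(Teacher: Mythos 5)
Your argument is correct, and it is worth noting first that the paper you are comparing against does not actually prove this statement: it imports it from \cite{EF2} (Theorem 6.4) and only remarks that the proof there rests on gapset filtrations. So the relevant comparison is with that cited proof. Your route through Kunz coordinates is, modulo a change of dictionary, the same combinatorial idea: a numerical semigroup with $c\le 3m$ is exactly one of depth $\le 3$, i.e.\ a gapset filtration $(F_0,F_1,F_2)$ with $F_2\subseteq F_1\subseteq F_0=[1,m-1]$, and the translation $k_i=1+\card(\{j\ge 1: i\in F_j\})$ (the paper's own remark $k_{\sigma(i)}=e_0+\cdots+e_{i-1}$ makes this explicit) identifies your set $\{1,2,3\}^{m-1}$ with such filtrations; your condition $(\star)$ is the surviving gapset condition, and your append/delete moves on the last coordinate correspond to passing from multiplicity $m$ to $m\pm 1$ by deciding whether the new element $m$ lies in $F_1$ and $F_2$. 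All the individual steps check out: the equivalence $c\le 3m\Leftrightarrow k_i\le 3$ for all $i$ is right (the Frobenius number is $\max_i((k_i-1)m+i)$); the ``carry'' inequalities $k_i+k_j+1\ge k_{i+j-m}$ do hold automatically since their left side is at least $3$; the failure analysis of $k_i+k_j\ge k_{i+j}$ reducing to $(\star)$ is right; appending $v\in\{1,2\}$ creates only constraints with $i+j=m$ whose right side is $v\neq 3$, and no constraints with the new index on the left since those would need $i+j\ge m+1$; and the partition of $\mathcal K_g$ by the value of the last coordinate, together with injectivity of deletion, gives both inequalities at once (with the $m=1$ degenerate case confined to genus $0$, harmless for $g\ge 3$). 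A small numerical sanity check confirms the bookkeeping: for $g=4$ one gets $n'_4=6=n'_3+n'_2$ with no genus-$4$ vector ending in $3$, since $(1,3)$ violates $(\star)$. What your formulation buys is a completely self-contained and very short proof from the classical Kunz characterization, independent of the gapset-filtration machinery; what the filtration formulation in \cite{EF2} buys is uniformity with the rest of that paper and of this one, where the same insertion maps $f_i$ reappear for fixed multiplicity.
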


The proof of this result essentially rests on the notion of \emph{gapset filtrations}, a new flexible framework to investigate numerical semigroups introduced in~\cite{EF2}. More details are given in Section~\ref{section gapsets} since, here also, gapsets filtrations are at the core of the present results.
 
\begin{notation} Let $g\geq 0, m\geq 1$ be two integers.
We denote by~$\Gamma_{g,m}$ the finite set of all numerical semigroups of genus $g$ and multiplicity $m$, and by $n_ {g,m}=|\Gamma_{g,m}|$ its cardinality.
\end{notation}

Since, for a numerical semigroup $S$ of multiplicity $m$ and genus $g$, the integers $1,\dots,m-1$ belong to 
the complement $\N\setminus S$, the relation $g\geq m-1$ holds. Thus $n_{g,m}=0$ for $m\geq g+2$, and so we have
$$
n_g=\sum_{m=1}^{g+1} n_{g,m}.
$$
The first values of $n_{g,m}$ for $g\geq 0$ and small fixed $m$ are given below.
$$
\small
\begin{array}{r|rrrrrrrrrrrrrrrrrrr}
    g&0&1&2&3&4&5& 6& 7& 8& 9&10&11&12& 13& 14&\dots\\
  \hline
  m=1&1&0&0&0&0&0& 0& 0& 0& 0& 0& 0& 0&  0&  0&\dots\\
  m=2&0&1&1&1&1&1& 1& 1& 1& 1& 1& 1& 1&  1&  1&\dots\\
  m=3&0&0&1&2&2&2& 3& 3& 3& 4& 4& 4& 5&  5&  5&\dots\\
  m=4&0&0&0&1&3&4& 6& 7& 9&11&13&15&18& 20& 23&\dots\\
  m=5&0&0&0&0&1&4&7&10&13&16&22&24&32&35& 43&\dots\\
  m=6&0&0&0&0&0&1&5&11&17&27&37&49&66&85&106&\dots
\end{array}$$
For instance, the unique numerical semigroup of multiplicity $1$ is $\N$. Nathan Kaplan proposed the following conjecture in ~\cite{K1}, a refinement of the conjectured inequality \eqref{weak conjecture}. 
\begin{conjecture}\label{conjecture kaplan} Let $m\geq 2$. Then 
\begin{equation}\label{small conjecture}
n_{g,m}\geq n_{g-1,m}
\end{equation}
for all $g \ge 1$.
\end{conjecture}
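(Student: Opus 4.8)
The plan is to produce, for every $m \ge 2$ and every $g \ge 1$, an explicit injection $\Gamma_{g-1,m} \hookrightarrow \Gamma_{g,m}$, which immediately yields $n_{g-1,m} \le n_{g,m}$. The first step is to linearize the problem through Kunz coordinates. A numerical semigroup $S$ of multiplicity $m$ is determined by its Ap\'ery set $\{0 = w_0, w_1, \dots, w_{m-1}\}$, where $w_i$ is the least element of $S$ with $w_i \equiv i \pmod m$; writing $w_i = k_i m + i$ yields the Kunz vector $k(S) = (k_1, \dots, k_{m-1}) \in \ZP^{m-1}$. One has $g = \sum_{i=1}^{m-1} k_i$, and $k(S)$ is characterised among positive integer vectors by the Kunz inequalities
\[
k_i + k_j \ge k_{i+j} \quad (i+j \le m-1), \qquad k_i + k_j + 1 \ge k_{i+j-m} \quad (i+j \ge m+1).
\]
Hence $n_{g,m}$ counts the lattice points of the Kunz cone lying on the slice $\sum_i k_i = g$, and the conjecture is the assertion that these slice-counts are nondecreasing in $g$.

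With this dictionary, the most economical injection would raise the total $\sum_i k_i$ by one, i.e.\ increment a single coordinate. Incrementing $k_i$ preserves every inequality in which $k_i$ occurs on the larger side, and the multiplicity is automatically preserved since all entries stay $\ge 1$; the only obstructions are the inequalities in which $k_i$ sits on the smaller side, namely $k_a + k_b \ge k_i$ with $a+b = i$ and $k_a + k_b + 1 \ge k_i$ with $a+b = m+i$. Incrementing $k_i$ is therefore admissible precisely when $k_i$ has strictly positive slack in all of these. I would define $\Phi$ by incrementing the coordinate of smallest index that is admissible, and prove injectivity by showing that the distinguished incremented index can be recovered from the image, so that $\Phi$ is inverted by decrementing it. To make this reconstruction canonical I would work with the gapset filtration of~\cite{EF2}, which records the elements of a gapset level by level in the blocks $[\,im, (i+1)m\,[$; the added gap then lands in a level pinned down by the slack data, which is what would render the map invertible.

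The hard part will be twofold: first, showing that an admissible coordinate always exists, so that $\Phi$ is everywhere defined; and second, verifying injectivity uniformly in $m$ and in the depth $q = \depth(S) = \lceil c/m \rceil$. Both difficulties stem entirely from the tightness of the Kunz inequalities---their equality cases---and especially from the wrap-around relations $k_a + k_b + 1 \ge k_{a+b-m}$, whose interaction couples distant filtration levels. For bounded depth these couplings are finite in extent and can be dispatched by the case analysis underlying the generic result quoted above and the classification at $m \le 4$; the genuine difficulty is the unbounded-depth regime. I would therefore attempt an induction on $q$, peeling off the top level of the filtration: one shows the injection exists for gapsets of depth $\le q$, and that passing from depth $q$ to $q+1$ adds a controlled layer compatible with the increment rule. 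I expect the control of the wrap-around inequalities across all levels of the filtration to be the principal obstacle, and it is precisely this obstacle that currently confines complete proofs to small multiplicity and to the generic, bounded-depth case.
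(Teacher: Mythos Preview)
The statement you are addressing is not a theorem of the paper but an open conjecture. The paper does not prove it; it only establishes the special cases $m=3$ (Corollary~\ref{T3}) and $m=4$ (Corollary~\ref{CI4}), and explicitly records in its concluding remark that the construction of injections $\fc(g,m)\to\fc(g+1,m)$ for general $m$ remains open. Your write-up is likewise not a proof: it is a research outline in which you yourself flag the two essential steps---existence of an admissible coordinate to increment, and recoverability of that coordinate from the image---as unresolved, and you close by stating that this obstacle ``currently confines complete proofs to small multiplicity.'' So there is no proof here to compare; what can be compared is your proposed strategy against the paper's strategy in the cases it actually settles.

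On that front, your idea of incrementing a single Kunz coordinate $k_i$ is exactly the Kunz-coordinate translation of the paper's insertion map $f_i$ on gapset filtrations; the two languages are equivalent via the identity $k_{\sigma(i)}=e_0+\dots+e_{i-1}$ recorded after Theorem~\ref{thrm}. The substantive difference is in \emph{how} the index is chosen. The paper, for $m=3$ and $m=4$, makes a \emph{global} choice of $i$ depending only on the residue of $g$ (mod~$3$ for $m=3$, mod~$2$ for $m=4$); this makes injectivity automatic, since every filtration is hit with the same $f_i$ and $f_i$ is visibly injective. Your rule is \emph{adaptive}: pick the smallest admissible $i$ for each individual semigroup. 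That adaptivity is forced for $m\ge 5$ by the paper's own concluding remark that no single $f_i$ suffices, but it is precisely what makes both well-definedness and injectivity nontrivial. In particular, your claim that ``the added gap then lands in a level pinned down by the slack data'' is the entire content of the problem: you would need to show that, given a Kunz vector in the image, one can read off which coordinate was incremented without knowing the source---and nothing in your outline supplies that argument. Until those two steps are carried out, the proposal remains a plausible plan of attack on an open conjecture rather than a proof.
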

On the other hand, still for $m \ge 2$ fixed, there is no hope a stronger inequality such as~\eqref{strong conjecture} may hold for the $n_{g,m}$, as the reader can check by looking at the rows of the above table.

Conjecture~\ref{conjecture kaplan} is trivial for $m=2$ since $n_{g,2}=1$ for all $g \ge 1$, and has been settled for $m=3,4,5$ in 2018 by Pedro A. Garc\'{\i}a-S\'anchez, Daniel Mar\'{i}n-Arag\'on and Aureliano M. Robles-P\'erez~\cite{GMR}. For that, they used a linear integer software to count the number of integral points of the associated Kunz polytope. With it, they first achieved formulas for $n_{g,m}$ for $m=3,4,5$, and then proved them to be increasing using a computer algebra system. The conjecture remains open for $m \ge 6$.
 
Our purpose in this paper is to give new proofs of Conjecture~\ref{conjecture kaplan} for $m=3$ and $m=4$ by constructing explicit injections 
$$\Gamma_{g,3} \to \Gamma_{g+1,3} \quad \textrm{and} \quad \ \Gamma_{g,4} \to \Gamma_{g+1,4}$$ for $g \ge 0$, thereby establishing the desired inequalities $n_{g+1,3}\geq n_{g,3}$ and $n_{g+1,4}\geq n_{g,4}$. Thus, our proofs are computer-free and do not rest on counting formulas for $n_{g,3}$ and $n_{g,4}$. These injections were first announced in~\cite{EF2}.

\section{Gapset filtrations}\label{section gapsets}
The content of this section is mostly taken from~\cite{EF2}.

\begin{definition} Let $n \in \N_+$. An \emph{additive decomposition} of $n$ is any expression of the form $n=a+b$ with $a,b \in \N_+$. We refer to the positive integers $a,b$ as the \emph{summands} of this decomposition.
\end{definition}

\begin{definition} A \emph{gapset} is a finite set $G \subset \N_+$ satisfying the following property: for all $z \in G$, if $z=x+y$ with $x,y \in \N_+$, then $x \in G$ or $y \in G$. That is, for any additive decomposition of $z \in G$, at least one of its summands belongs to $G$.  
\end{definition}
Notice the similarity of this definition with that of a prime ideal $P$ in a ring $R$, where for any $z\in P$, any decomposition $z=xy$ with $x,y \in R$ implies $x \in P$ or $y \in P$. 

\begin{remark} It follows from the definition that a gapset $G$ is nothing else than the set of gaps of a numerical semigroup $S$, where $S = \N \setminus G$.
\end{remark}

\begin{definition}
We naturally extend the definitions of \emph{multiplicity}, \emph{Frobenius number}, \emph{conductor} and \emph{genus} of a gapset $G$ as being those of the corresponding numerical semigroup $S = \N \setminus G$, respectively. 
\end{definition}
More directly, for a gapset $G$, these notions may be described as follows:
\begin{enumerate}
\item[---]  the multiplicity of $G$ is the smallest integer $m\geq 1$ such that  $m\not\in G$;

\item[---]  the Frobenius number of $G$ is $\max(G)$ if $G\not=\emptyset$, and $-1$ otherwise;

\item[---]  the conductor of $G$ is $1+\max(G)$ if $G\not=\emptyset$, and $0$ otherwise;

\item[---]  the genus of $G$ is $g(G)=\card(G)$.
\end{enumerate}

\begin{example}
\label{Ex2}
The set $G=\{1,2,3,4,6,7,11\}$ is a gapset. For instance, for each additive decomposition of $11$, namely 
$$1+10, \quad 2+9,\quad 3+8, \quad 4+7, \quad 5+6,$$ 
at least one of the two summands belongs to $G$. Let $S=\N\setminus G=\{0,5,8,9,10\}\cup[12,+\infty[.$ Then $S=\left<5,8,9\right>$ as easily seen, whence $S$ is indeed a numerical semigroup. The multiplicity, conductor, Frobenius number, genus and embedding dimension of $G$ and $S$ are $m=5$, $c=11$, $f=12$, $g=7$ and $e=3$, respectively.
\end{example}

\subsection{The canonical partition}

\begin{lemma} Let $G$ be a gapset of multiplicity $m$. Then 
\begin{eqnarray*}
[1,m-1] & \subseteq & G, \\
G \cap m\N & = & \emptyset.
\end{eqnarray*}
\end{lemma}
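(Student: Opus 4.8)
The statement to prove is elementary and follows directly from the definitions, so the plan is short.

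\medskip

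The plan is to unwind the two defining descriptions of the multiplicity $m$ of a gapset $G$. For the first inclusion $[1,m-1]\subseteq G$: by definition $m$ is the \emph{smallest} positive integer not lying in $G$, so every positive integer strictly smaller than $m$ must lie in $G$; that is exactly $[1,m-1]\subseteq G$. (If $m=1$ this interval is empty and there is nothing to check.)

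\medskip

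For the second assertion $G\cap m\N=\emptyset$, I would argue by contradiction, or equivalently by strong induction on the multiple $km$. Since $0\notin G$ (gapsets are subsets of $\N_+$) and $m\notin G$ by choice of $m$, the base cases hold. Now suppose $km\in G$ for some $k\ge 2$, and take $k$ minimal with this property. Write the additive decomposition $km=(k-1)m+m$ with both summands in $\N_+$. By the gapset property, $(k-1)m\in G$ or $m\in G$. But $m\notin G$ by definition of the multiplicity, and $(k-1)m\notin G$ by minimality of $k$ (or because $k-1\ge 1$ and we have already excluded $0$ and $m$ in the base). This contradiction shows no positive multiple of $m$ lies in $G$, hence $G\cap m\N=\emptyset$.

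\medskip

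There is essentially no obstacle here: both parts are immediate consequences of the definition of multiplicity together with the defining closure property of a gapset. The only point requiring the tiniest care is the induction in the second part, to make sure the decomposition $km=(k-1)m+m$ has both summands in $\N_+$, which holds precisely because $k\ge 2$ and $m\ge 1$.
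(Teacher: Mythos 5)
Your proposal is correct and follows essentially the same route as the paper: the inclusion $[1,m-1]\subseteq G$ is read off directly from the definition of multiplicity, and the second claim is proved by induction on $k$ using the decomposition $km=(k-1)m+m$ together with the gapset property (the paper phrases this as a direct induction on $a$ in $am=m+(a-1)m$, while you phrase it as a minimal counterexample, which is the same argument). No gaps.
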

\begin{proof}  By definition of the multiplicity, $G$ contains $[1,m-1]$ but not~$m$. Let $a \ge 2$ be an integer. The formula $am=m+(a-1)m$ and induction on $a$ imply that $am \notin G$.
\end{proof}
 This motivates the following notation and definition.

\begin{notation} Let $G$ be a gapset of multiplicity $m$. We denote $G_0=[1,m-1]$ and, more generally,
\begin{equation}\label{G_i}
G_i = G \cap [im+1,(i+1)m-1] \quad \mbox{ for all } i \ge 0.
\end{equation}
\end{notation}

\begin{definition}
Let $G$ be a gapset of multiplicity $m$ and conductor $c$. 
The \emph{depth} of $G$ is the integer~$q=\lceil c/m\rceil$.
\end{definition}

\begin{proposition}\label{subset} Let $G$ be a gapset of multiplicity $m$ and depth $q$. Let $G_i$ be defined as in \eqref{G_i}. Then
\begin{equation}\label{partition}
G = G_0 \sqcup G_1 \sqcup \dots \sqcup G_{q-1}
\end{equation}
and $G_{q-1} \not= \emptyset$. Moreover $G_{i+1} \subseteq m+G_{i}$ for all $i \ge 0$. 
\end{proposition}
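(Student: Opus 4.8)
The plan is to unpack the three assertions of Proposition~\ref{subset} one at a time, using only the definitions of depth, conductor, and gapset, together with the previous lemma. The decomposition \eqref{partition} is essentially bookkeeping: by the previous lemma $G_0 = [1,m-1] \subseteq G$ and $G \cap m\N = \emptyset$, so every element of $G$ lies in exactly one interval of the form $[im+1,(i+1)m-1]$ with $i \ge 0$; to see that only $i = 0, \dots, q-1$ contribute, note $\max(G) = c-1$ and $q = \lceil c/m \rceil$ forces $c-1 \le qm-1$, hence $G \subseteq [1,qm-1] = G_0 \sqcup \dots \sqcup G_{q-1}$. The sets are pairwise disjoint since the defining intervals are, so \eqref{partition} follows.

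Next I would show $G_{q-1} \ne \emptyset$. If $G = \emptyset$ then $c = 0$, $q = 0$, and there is nothing to prove, so assume $G \ne \emptyset$ and $q \ge 1$. Suppose for contradiction $G_{q-1} = \emptyset$. Then $G \subseteq [1,(q-1)m-1]$, so $\max(G) \le (q-1)m - 1$, i.e. $c = 1 + \max(G) \le (q-1)m$. But then $\lceil c/m \rceil \le q-1 < q$, contradicting the definition of depth. (The edge case $q = 1$ needs the convention that $G_0 = [1,m-1]$ is nonempty precisely when $m \ge 2$, which holds since $G \ne \emptyset$ forces $m \ge 2$.) Hence $G_{q-1} \ne \emptyset$.

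The substantive step is $G_{i+1} \subseteq m + G_i$ for all $i \ge 0$; equivalently, if $z \in G$ with $z \in [(i+1)m+1,(i+2)m-1]$, then $z - m \in G$. Here I would invoke the gapset property applied to the additive decomposition $z = m + (z-m)$: since $z \in G$, either $m \in G$ or $z - m \in G$. Because $m \notin G$ by definition of the multiplicity, we conclude $z - m \in G$. It remains to check that $z - m$ lands in the right interval, namely $[im+1,(i+1)m-1]$: this is immediate by subtracting $m$ from the bounds $(i+1)m+1 \le z \le (i+2)m-1$. Note that $z - m \ge im + 1 \ge 1$ when $i \ge 0$, so $z - m \in \N_+$ and the decomposition $z = m + (z-m)$ is genuinely an additive decomposition in the sense required, which is what makes the gapset axiom applicable.

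I expect no serious obstacle here; the only points requiring a little care are the boundary cases $G = \emptyset$ (equivalently $q = 0$) and $q = 1$ in the nonemptiness claim, and making sure that in the inclusion $G_{i+1} \subseteq m + G_i$ the shifted element $z-m$ is a positive integer so that the gapset axiom genuinely applies. Everything else is a direct translation of the definitions, and the previous lemma supplies exactly the two facts ($G_0 \subseteq G$ and $G \cap m\N = \emptyset$) that make the partition statement work.
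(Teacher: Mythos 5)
Your proof is correct and follows essentially the same route as the paper: the partition comes from $G\cap m\N=\emptyset$ together with $G\subseteq[1,qm-1]$, and the inclusion $G_{i+1}\subseteq m+G_i$ comes from applying the gapset axiom to $x=m+(x-m)$ with $m\notin G$. The only cosmetic difference is that you establish $G_{q-1}\neq\emptyset$ by contradiction from the minimality in $q=\lceil c/m\rceil$, whereas the paper directly exhibits the Frobenius number $f=c-1$ as an element of $G_{q-1}$; both arguments are equally valid.
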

\begin{proof} As $G \cap m\N = \emptyset$, it follows that $G$ is the disjoint union of the $G_i$ for $i \ge 0$. Let $c$ be the conductor of $G$. Then $G \subseteq [1,c-1]$. Since $(q-1)m < c \le qm$ by definition of $q$, it follows that $G_i = \emptyset$ for $i \ge q$, whence \eqref{partition}. Let $f=c-1$. Since $f \in G$, $(q-1)m \le f < qm$ and $f \not\equiv 0 \bmod m$, it follows that $f \in G_{q-1}$. 

It remains to show that $G_{i+1} \subseteq m+G_{i}$ for all $i \ge 0$.
Let $x \in G_{i+1}$. Since $G_{i+1} \subseteq [(i+1)m+1, (i+2)m-1]$, we have 
$$
x-m \in [im+1, (i+1)m-1].  
$$
Now $x-m \in G$ since $x = m + (x-m)$ and $m \notin G$. So $x-m \in G_i$.
\end{proof}
\begin{definition} Let $G$ be a gapset. The \emph{canonical partition} of $G$ is the partition $G = G_0 \sqcup G_1 \sqcup \dots \sqcup G_{q-1}$ given by Proposition~\ref{subset}.
\end{definition}

\begin{remark}\label{data gapsets}
The multiplicity $m$, genus $g$ and depth $q$ of a gapset $G$ may be read off from its canonical partition $G = \sqcup_i G_i$ as follows : 
\begin{eqnarray*}
m & = & \max(G_0) +1, \\
g & = & \sum_i |G_i|, \\
q & = & \textrm{the number of parts of the partition}.
\end{eqnarray*}
\end{remark}

\subsection{Gapset filtrations} 

Let $G \subset \N_+$ be a gapset. Let $G = G_0 \sqcup G_1 \sqcup \dots \sqcup G_{q-1}$ be its canonical partition. For all $0 \le i \le q-1$, denote
\begin{equation}\label{Fi Gi}
F_i = -im +G_i.
\end{equation}
Then $F_{i+1} \subseteq F_i$ for all $i$, as follows from the inclusion $G_{i+1} \subseteq m+G_{i}$ stated in Proposition~\ref{subset}. This gives rise to the following definition.

\begin{definition}
Let $G \subset \N_+$ be a gapset of multiplicity $m$ and depth $q$. The \emph{gapset filtration} associated to $G$ is the finite sequence 
$$(F_0,F_1,\dots,F_{q-1})=(G_0,-m+G_1,\dots,-(q-1) m +G_{q-1}),$$
\end{definition}
\noindent
i.e. with $F_i$ defined as in \eqref{Fi Gi} for all $i$.Thus, as seen above, we have  
\begin{equation}
\label{decr_filtration}
F_0=[1,m-1]\supseteq F_1 \supseteq \dots \supseteq F_{q-1}.
\end{equation}

\smallskip
We define the \emph{multiplicity}, \emph{Frobenius number}, \emph{conductor} and \emph{genus} of a gapset filtration $F=(F_0,\dots,F_{q-1})$ from those of the corresponding gapset G, namely:
\begin{itemize}
\item[---] the multiplicity of $F$ is $1+\max(F_0)$ if $F_0\not=\emptyset$ and $0$ otherwise;

\item[---] the Frobenius number of $F$ is $qm+\max(F_{q-1})$ if $F_0\not=\emptyset$ and $-1$ otherwise;

\item[---] the conductor of $F$ is $1+qm+\max(F_{q-1})$ if $F_0\not=\emptyset$ and $0$ otherwise;

\item[---] the genus of $F$ is $\card(F_0)+\dots+\card(F_{q-1})$.
\end{itemize}

\begin{example}
\label{Ex3}
Consider the gapset $G=\{1,2,3,4,6,7,11\}$ of Example~\ref{Ex2}. Its multiplicity is $m=5$, and its canonical partition is given by $G_0=\{1,2,3,4\}$, $G_1=\{6,7\}$ and $G_2=\{11\}$. Thus, its associated filtration is $F=(\{1,2,3,4\},\{1,2\},\{1\})$.
\end{example}

\begin{definition}
For integers $g\geq 1, m\geq 1$, we denote by $\fc(g, m)$ the set of all gapset filtrations of genus $g$ and multiplicity $m$.
\end{definition} 

Note that any given gapset filtration $F=(F_0, \dots, F_{q-1})$ corresponds to a \emph{unique} gapset $G$, since \eqref{Fi Gi} is equivalent to 
\begin{equation}\label{Gi Fi}
G_i = im +F_i.
\end{equation}
In particular, \emph{there is a straigthforward bijection between gapsets $G$ and gapset filtrations $F$}, which naturally preserves the multiplicity, Frobenius number, conductor and genus. Here is a direct consequence.

\begin{proposition}
\label{card_fc}
For any integers $g\geq 1, m\geq 1$, we have $$n_{g,m} = |\fc(g,m)|.$$
\end{proposition}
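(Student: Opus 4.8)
The plan is to produce an explicit bijection between $\Gamma_{g,m}$ and $\fc(g,m)$ and then compare cardinalities. First I would recall, from the Remark following the definition of a gapset, that the map $S \mapsto G := \N\setminus S$ is a bijection from the set of all numerical semigroups onto the set of all gapsets, with inverse $G \mapsto \N\setminus G$. Since the multiplicity, Frobenius number, conductor and genus of a gapset were \emph{defined} to be those of the associated numerical semigroup, this bijection restricts to a bijection between $\Gamma_{g,m}$ and the set of gapsets of genus $g$ and multiplicity $m$.

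Next I would invoke the correspondence set up just before the statement: to a gapset $G$ of multiplicity $m$ and depth $q$, with canonical partition $G = G_0 \sqcup \dots \sqcup G_{q-1}$, one associates its gapset filtration $F = (F_0,\dots,F_{q-1})$ with $F_i = -im + G_i$ as in \eqref{Fi Gi}. This assignment is well defined and invertible, since \eqref{Gi Fi} gives $G_i = im + F_i$, which recovers the canonical partition and hence $G$ itself; thus it is a bijection from gapsets onto gapset filtrations. By construction it preserves both invariants in play: using Remark~\ref{data gapsets} together with the defining list for filtrations, $1+\max(F_0) = \max(G_0)+1 = m$ and the genus of $F$ equals $\sum_i |F_i| = \sum_i |G_i| = g$. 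Hence it restricts to a bijection between the gapsets of genus $g$ and multiplicity $m$ and the set $\fc(g,m)$.

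Composing the two bijections yields a bijection $\Gamma_{g,m} \to \fc(g,m)$, and passing to cardinalities gives $n_{g,m} = |\Gamma_{g,m}| = |\fc(g,m)|$, as claimed. There is essentially no obstacle here, since both ingredients have already been established in the preceding pages; the only point requiring a moment's care is the bookkeeping that each of the two bijections genuinely respects \emph{both} the genus and the multiplicity, which is immediate from Remark~\ref{data gapsets} and the definitions of these invariants for filtrations.
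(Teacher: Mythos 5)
Your proof is correct and is essentially the paper's own argument: the paper simply says ``straightforward from the above discussion,'' and that discussion is precisely the composition of the complementation bijection $S \mapsto \N\setminus S$ with the gapset--filtration bijection given by \eqref{Fi Gi} and \eqref{Gi Fi}, both preserving genus and multiplicity. You have merely written out the bookkeeping the paper leaves implicit.
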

\begin{proof} Straightforward from the above discussion.
\end{proof}

This result allows us to study properties of the sequence $g\mapsto n_{g,m}$ in the setting of gapset filtrations of multiplicity $m$. In particular, in order to establish its growth, it suffices to exhibit injections from $\fc(g,m)$ to $\fc(g+1,m)$. This is what we achieve  in subsequent sections for $m=3$ and $m=4$. 

We start with the separate case $m=3$, which can be treated in a straightforward way and which points to a general strategy for larger values of $m$. Then, following those clues, we introduce some general tools, and we end up applying them to the case $m=4$.

\section{The case $m=3$}\label{section m=3}

Any filtration $(F_0,\dots,F_t)$ such that 
$$\{1,2\}=F_0 \supseteq F_1 \supseteq \dots \supseteq F_{t}\not= \emptyset$$ is of one of the two possible forms below, with the terms on the left standing as a compact notation:

\begin{eqnarray*}
(12)^r(1)^s & = & (\underbrace{\{1,2\},\dots,\{1,2\}}_{r},\underbrace{\{1\},\dots,\{1\}}_s), \\
(12)^r(2)^s & = & (\underbrace{\{1,2\},\dots,\{1,2\}}_{r},\underbrace{\{2\},\dots,\{2\}}_s),
\end{eqnarray*} 
both with $r \ge 1$ since $F_0=\{1,2\}$, and $s \ge 0$. We now characterize those filtrations which are gapset filtrations of multiplicity $3$.

\begin{theorem}
\label{thm m=3}
The gapset filtrations of multiplicity $m=3$ are exactly the following ones:
\begin{align*}
(12)^r(2)^s&\quad\text{with $0 \le s\leq r$},\\
(12)^r(1)^s&\quad\text{with $0 \le s\leq r+1$},
\end{align*}
both with $r\geq 1$.
\end{theorem}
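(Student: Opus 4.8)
The plan is to characterize, among the two families of candidate filtrations $(12)^r(2)^s$ and $(12)^r(1)^s$, exactly those that actually arise as gapset filtrations. By the bijection between gapsets and gapset filtrations (equations \eqref{Fi Gi} and \eqref{Gi Fi}), it suffices to translate each candidate filtration back into a subset $G\subset\N_+$ via $G_i=im+F_i$ with $m=3$, and then test the defining gapset property: for every $z\in G$ and every additive decomposition $z=x+y$ with $x,y\in\N_+$, at least one summand lies in $G$. Equivalently, I will check the complement condition: $S=\N\setminus G$ must be closed under addition. The slogan to keep in mind is that $S$ is an additive submonoid of $\N$, so the only thing that can fail is having two elements of $S$ whose sum lands in $G$.

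First I would write down $G$ explicitly for each family. For $(12)^r(1)^s$ we get $G=[1,2]\cup\{3i+1,3i+2 : 1\le i\le r-1\}\cup\{3i+1 : r\le i\le r+s-1\}$, so $G$ consists of all of $[1,3r-1]$ together with the residues $\equiv 1\bmod 3$ in the next $s$ blocks; thus $S\cap[0,3(r+s)]$ is $\{0\}\cup\{3,6,\dots,3r\}\cup\{3i,3i+2 : r\le i\le r+s-1\}\cup\{3(r+s)\}$ plus everything from the conductor on. For $(12)^r(2)^s$ the picture is symmetric with residue $2$ in place of residue $1$. Then I would verify the gapset property directly. The key observation is that the multiplicity-$3$ structure forces $3\in S$, so $S$ contains the arithmetic progression $\{0,3,6,\dots,3r\}$ and the question of closure reduces to understanding when $a+3\in S$ for the ``sporadic'' elements $a$ of $S$ lying beyond $3r$ (those congruent to $1$ or $2$ mod $3$), and when two such sporadic elements sum into $G$. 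This is where the constraints $s\le r$ and $s\le r+1$ will emerge: in the $(2)^s$ case the sporadic elements $3i+2$ for $r\le i\le r+s-1$ must, when doubled or added pairwise, either land past the conductor or land on a multiple of $3$ that is already in $S$ — and $2+2=4\equiv 1$, so there is an asymmetry that costs one block, giving $s\le r$; in the $(1)^s$ case $1+1=2$ and the relevant sums behave one block more favourably, giving $s\le r+1$.

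Concretely, I expect the heart of the argument to be two finite case analyses. For each family I would (i) show the stated bound is \emph{necessary} by exhibiting, when $s$ is one larger than allowed, an explicit additive decomposition $z=x+y$ of some $z\in G$ with both $x,y\notin G$ — the natural candidate being the smallest element $z\equiv 1\bmod 3$ (resp. appropriate residue) that sits in a block just past where the filtration is still nonempty, decomposed using two copies of a sporadic semigroup element; and (ii) show the bound is \emph{sufficient} by checking that when $s\le r$ (resp. $s\le r+1$) every $z\in G$ satisfies the gapset condition. For step (ii) the cleanest route is to note $G\subseteq[1,c-1]$ where $c$ is the conductor, so I only need to consider decompositions $z=x+y$ with $x,y<c$; among these, if $x$ or $y$ is a multiple of $3$ then the other summand is $z$ minus a multiple of $3$, and I can use the fact that $G$ restricted to each residue class is an \emph{interval} of blocks $[\,i_0,i_1\,]$ (this is exactly the "decreasing filtration" property $F_0\supseteq F_1\supseteq\cdots$) to conclude one summand stays in $G$; the remaining decompositions have both summands coprime-to-$3$, a genuinely finite and small list to dispatch.

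The main obstacle, I anticipate, is not any single computation but keeping the bookkeeping of residues and block-indices under control so that the necessity examples and the sufficiency verification are genuinely exhaustive and transparent — in particular, being careful about the small-$s$ and $s=0$ edge cases, about the block $i=r-1$ where $G$ transitions from a full pair $\{1,2\}$ to a singleton, and about decompositions where one summand equals the multiplicity $3$ itself. A secondary subtlety is making the asymmetry between the $(1)^s$ and $(2)^s$ families precise: it ultimately comes down to the single arithmetic fact that $1+1\equiv 2$ while $2+2\equiv 1\pmod 3$, so that "stacking ones" is compatible with closure for one extra block whereas "stacking twos" is not, and I would isolate this as the conceptual core before drowning it in notation.
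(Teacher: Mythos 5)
Your proposal is correct and takes essentially the same route as the paper: both translate each candidate filtration back to the set $G$ via $G_i=3i+F_i$, enumerate the additive decompositions of elements of $G$ by residue class modulo $3$, and locate the unique dangerous decomposition (two summands of equal residue), with the bounds $s\le r$ versus $s\le r+1$ emerging exactly from the block-carry in $2+2=3+1$ that you identify. The paper's Steps 1--4 carry out precisely the bookkeeping you outline, so only the routine verification remains to be written out.
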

Note that $g=2r+s$ in both cases, since the genus of a gapset filtration $F=(F_0,\dots,F_{q-1})$ is given by the sum of the $|F_i|$.

\begin{proof} We start with the second case.

\smallskip
\noindent
\underline{Case $F=(12)^r(2)^s$.} Then 
\begin{eqnarray*}
& F_0 = \dots=F_{r-1}=\{1,2\}, \\
& F_r = \dots=F_{r+s-1}=\{2\}.
\end{eqnarray*}
Using \eqref{Gi Fi} with $m=3$, namely $G_i =3i+F_i$ for all $i$, let 
\begin{equation}\label{set G}
G = G_0 \cup \dots \cup G_{r+s-1}
\end{equation} 
be the corresponding finite set. By construction, $F$ is a gapset filtration if and only if $G$ a gapset. So, when is it the case that $G$ is a gapset? We now proceed to answer this question.

\smallskip
\noindent
\textbf{Step 1.} The set $G$ given by \eqref{set G} has the following properties:
\begin{eqnarray*}
3\N \cap G & = & \emptyset \\
3i+1 \in G & \iff & i \le r-1\\
3i+2 \in G & \iff & i \le r+s-1.
\end{eqnarray*}
Indeed, this directly follows from the definition $G_i=3i+F_i$ and \eqref{set G}.

\smallskip
\noindent
\textbf{Step 2.} For $i \in \N$, any additive decomposition $3i+1=a+b$ is of the form 
$$
(a,b) = (3x+1, 3(i-x)) \mbox{ or}\,\, (3y+2, 3(i-1-y)+2)
$$
for some integers $0 \le x \le i-1$ or $0 \le y \le i-1$. Similarly, any additive decomposition $3i+2=a+b$
is of the form
$$
(a,b) = (3x+2, 3(i-x)) \mbox{ or}\,\,  (3y+1, 3(i-y)+1)
$$
for some integers $0 \le x \le i-1$ or $0 \le y \le i$. 

\smallskip
\noindent
\textbf{Step 3.} 
Let $3i+1 \in G$, i.e. with $i \le r-1$ according to Step 1. We now show that for any additive decomposition $3i+1=a+b$, either $a$ or $b$ belongs to $G$. Using Step 1, if $(a,b) = (3x+1, 3(i-x))$, then $a \in G$ since $x \le i$ and we are done. Similarly, if $(a,b)=(3y+2, 3(i-1-y)+2)$, then $a \in G$ since $y \le i \le r-1 \le r+s-1$ and we are done again.

\smallskip
\noindent
\textbf{Step 4.} Let $3i+2 \in G$, i.e. with $i \le r+s-1$. Let $3i+2=a+b$ be any additive decomposition. If $(a,b) = (3x+2, 3(i-x))$, then $a \in G$ since $x \le i$ and we are done.   Assume now $(a,b) = (3y+1, 3(i-y)+1)$ with $0 \le y \le i$. Then $a,b \notin G$ if and only if $y,i-y \ge r$. This is only possible if $i \ge 2r$ and, since $i \le r+s-1$ by hypothesis, the latter is equivalent to $s-1 \ge r$. In particular, if $s \le r$, then either $a$ or $b$ belongs to $G$. In summary, we have
$$
(12)^r(2)^s \mbox{ is a gapset filtration } \iff G \mbox{ is a gapset } \iff s \le r,  \vspace{-0.07cm}
$$
as desired.

\medskip
\noindent
\underline{Case $F=(12)^r(1)^s$.} The arguments are similar to those of the previous case. Here, to start with, we have
\begin{eqnarray*}
& F_0 = \dots=F_{r-1}=\{1,2\}, \\
& F_r = \dots=F_{r+s-1}=\{1\}.
\end{eqnarray*}
The corresponding set $G$ defined by $G_i =3i+F_i$ for all $i$ and \eqref{Gi Fi} has the following properties:
\begin{eqnarray*}
3\N \cap G & = & \emptyset \\
3i+1 \in G & \iff & i \le r+s-1\\
3i+2 \in G & \iff & i \le r-1.
\end{eqnarray*}
Analogously to Step 3 above, it is easy to see that for any additive decomposition $a+b=3i+2$ where $3i+2 \in G$, then either $a$ or $b$ belongs to $G$.

On the other hand, let $3i+1 \in G$. Then, analogously to Step 4 above, we find that there exists an additive decomposition $3i+1=a+b$ with $a,b \notin G$ if and only if $s \ge r+2$. The details, using Step 2 and the above properties of $G$, are straightforward and left to the reader. Therefore, $G$ is a gapset if and only if $s \le r+1$, as claimed. This concludes the proof of the proposition.  
\end{proof}

Here is a straightforward consequence of the above characterization and the main result of this section.

\begin{corollary}
\label{T3} For all $g \ge 0$, there is a natural injection
$$\fc(g, 3) \longrightarrow \fc(g+1, 3).$$
In particular, we have $n_{g+1,3} \ge n_{g,3}$ for all $g \ge 0$.
\end{corollary}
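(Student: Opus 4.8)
The plan is to use the complete list of multiplicity-$3$ gapset filtrations supplied by Theorem~\ref{thm m=3}. Recall from it that, for $r\ge 1$, the filtrations of multiplicity $3$ are exactly the pure ones $(12)^r$, the ``type-$2$'' ones $(12)^r(2)^s$ with $1\le s\le r$, and the ``type-$1$'' ones $(12)^r(1)^s$ with $1\le s\le r+1$, the genus being $2r+s$ in all cases. By Proposition~\ref{card_fc} we have $n_{g,3}=|\fc(g,3)|$, so it suffices to build, for each $g$, an injection $\phi\colon\fc(g,3)\to\fc(g+1,3)$. For $g\in\{0,1\}$ one has $\fc(g,3)=\varnothing$ (a gapset of multiplicity $3$ contains $\{1,2\}$, hence has genus $\ge 2$), so the injection is the empty map; assume henceforth $g\ge 2$.

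The main idea is to raise the genus by one by lengthening the repeated tail of the filtration. The \emph{main rule} of $\phi$ sends $(12)^r(2)^s\mapsto(12)^r(2)^{s+1}$ whenever $s\le r-1$, sends $(12)^r(1)^s\mapsto(12)^r(1)^{s+1}$ whenever $s\le r$, and (reading the pure filtration $(12)^r$ as the $s=0$ case of the type-$1$ family) sends $(12)^r\mapsto(12)^r(1)$. By Theorem~\ref{thm m=3} each output is again a gapset filtration, of genus $2r+s+1=g+1$. This rule applies to every filtration except the two \emph{extremal} ones attached to each $r$, where the tail already has maximal length: $(12)^r(2)^r$, of genus $3r$, and $(12)^r(1)^{r+1}$, of genus $3r+1$. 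The point about these is that in every genus $k\ge 2$ there is exactly one filtration that is pure or of the form $(12)^{r'}(2)$ (a pure filtration having even genus, a filtration $(12)^{r'}(2)$ odd genus), and such filtrations are never produced by the main rule. I therefore send $(12)^r(2)^r$, resp.\ $(12)^r(1)^{r+1}$, to the unique pure-or-$(12)^{r'}(2)$ filtration of genus $3r+1$, resp.\ $3r+2$; again this raises the genus by one.

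For injectivity I would argue as follows. On its type-$1$ domain (the pure filtrations together with the $(12)^r(1)^s$, $1\le s\le r$), the main rule is injective, with image consisting of type-$1$ filtrations of tail length $\ge 1$; on its type-$2$ domain (the $(12)^r(2)^s$, $1\le s\le r-1$) it is injective, with image consisting of type-$2$ filtrations of tail length $\ge 2$; and the extremal rule has image among the pure and the $(12)^{r'}(2)$ filtrations. These three image classes are pairwise disjoint, so the only possible clash is between two extremal inputs. But a fixed genus $g$ carries at most one extremal input, since $3r$ and $3r'+1$ are never equal, and distinct input genera give distinct output genera; hence the extremal rule is injective as well. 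Therefore $\phi$ is injective, and $n_{g+1,3}=|\fc(g+1,3)|\ge|\fc(g,3)|=n_{g,3}$, as claimed.

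I expect the one delicate point to be exactly the extremal filtrations: they must be steered outside the reach of the main rule, which is what forces the ``pure or $(12)^{r'}(2)$'' targets, and the absence of collisions among these special outputs then rests on the arithmetic facts that the source genera $3r$ and $3r'+1$ lie in different residue classes mod $3$ and that the two admissible target shapes are distinguished by the parity of the genus. Everything else is a routine check against Theorem~\ref{thm m=3}.
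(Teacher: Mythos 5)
Your proposal is correct: the case analysis against Theorem~\ref{thm m=3} is accurate, the ``main rule'' outputs are indeed gapset filtrations of genus $g+1$, each genus carries at most one extremal filtration (since $3r\ne 3r'+1$), and the pure and $(12)^{r'}(2)$ filtrations are exactly the slots missed by the main rule, so the patched map is a well-defined injection. However, your construction is genuinely different from the paper's. The paper uses, for each $g$, a \emph{single} insertion map on all of $\fc(g,3)$: the map $f_1$ (insert a $1$ at the unique admissible position, so that $(12)^r(2)^s\mapsto(12)^{r+1}(2)^{s-1}$, not $(12)^r(2)^{s+1}$) when $g\equiv 0,2\bmod 3$, and the analogous map $f_2$ when $g\equiv 1\bmod 3$; the only thing to check is that the unique obstruction to $f_1$ (namely $(12)^r(1)^{r+1}$, of genus $3r+1$) and to $f_2$ (namely $(12)^r(2)^r$, of genus $3r$) cannot occur for the chosen residue class, and injectivity is immediate since each $f_i$ is an insertion. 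Your map instead mixes the two behaviours --- tail-extension acts as $f_1$ on type-$1$ filtrations and as $f_2$ on type-$2$ ones --- and then reroutes the at most one extremal filtration per genus to the unique unreached target; this costs you an extra image-disjointness and slot-counting argument, but it works uniformly in $g$ without splitting into residue classes. The paper's uniform-insertion strategy is the one that carries over to $m=4$ (with $f_1$/$f_3$ chosen by the parity of $g$), which is presumably why the authors favour it; your patching idea is closer in spirit to what one would need for $m\ge 5$, where the paper notes that a single insertion map no longer suffices.
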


\begin{proof}
Since $\fc(g, 3) = \emptyset$ for $g \le 1$, the statement holds in this case. Assume now $g \ge 2$. For $F=(F_0,\dots,F_{q-1}) \in \fc(g, 3)$, let us denote by $f_1(F)$ the insertion of a $1$ in $F$ at the unique possible position to get a new nonincreasing sequence of subsets of $[1,2]$. That is, for $r,s \ge 1$, we define
\begin{align*} 
(12)^r & \stackrel{f_1}{\longmapsto} (12)^r(1) \\
(12)^r(1)^s & \stackrel{f_1}{\longmapsto} (12)^r(1)^{s+1} \\
(12)^r(2)^s & \stackrel{f_1}{\longmapsto} (12)^{r+1}(2)^{s-1}.
\end{align*}
When is it the case that $f_1(F)$ is still a \emph{gapset} filtration, of course automatically of genus $g+1$? In other words, when do we have that $f_1(F)$ belongs $\fc(g+1, 3)$? Theorem~\ref{thm m=3} easily provides the following answer.
\begin{itemize}
\item If $F=(12)^r(2)^s \in \fc(g,3)$, then $f_1(F) \in \fc(g+1,3)$ for all $r,s$.
\item If $F=(12)^r(1)^s \in \fc(g,3)$, then $f_1(F) \in \fc(g+1,3)$ if and only if $s \le r$.
\end{itemize}
Recall that $g=2r+s$ in both cases. In particular, the only case where $F \in \fc(g,3)$ but $f_1(F) \notin \fc(g+1,3)$ is for $F=(12)^r(1)^s$ with $s=r+1$, i.e. for $F=(12)^r(1)^{r+1} \in \fc(g,3)$ where $g=3r+1$. 

Consequently, $f_1$ provides a well-defined map
$$f_1 \colon \fc(g, 3) \longrightarrow \fc(g+1, 3),$$
obviously injective by construction, whenever $g \not\equiv 1 \bmod 3$.

\smallskip
Similarly, for $F \in \fc(g, 3)$, denote by $f_2(F)$ the insertion of a $2$ in $F$ where it makes sense. That is, for $r,s \ge 1$, define
\begin{align*}
(12)^r & \stackrel{f_2}{\longmapsto} (12)^r(2) \\
(12)^r(1)^s & \stackrel{f_2}{\longmapsto} (12)^{r+1}(1)^{s-1} \\
(12)^r(2)^s & \stackrel{f_2}{\longmapsto} (12)^r(2)^{s+1}.
\end{align*}
By Theorem~\ref{thm m=3} again, we have
\begin{itemize}
\item If $F=(12)^r(2)^s \in \fc(g,3)$, then $f_2(F) \in \fc(g+1,3)$ if and only if $s \le r-1$.
\item If $F=(12)^r(1)^s \in \fc(g,3)$, then $f_2(F) \in \fc(g+1,3)$ for all $r,s \ge 1$.
\end{itemize}
In particular, the only case where $F \in \fc(g,3)$ but $f_2(F) \notin \fc(g+1,3)$ is for $F=(12)^r(2)^r \in \fc(g,3)$ with $g=3r$. Therefore, $f_2$ provides a well-defined injective map 
$$f_2 \colon \fc(g, 3) \longrightarrow \fc(g+1, 3)$$
whenever $g \not \equiv 0 \bmod 3$.

\smallskip
Summarizing, we end up with a well-defined injective map
$$f \colon \fc(g, 3) \longrightarrow \fc(g+1, 3)$$
defined by $f=f_1$ if $g \equiv 0,2 \bmod 3$, and $f=f_2$ otherwise.
\end{proof}

\section{Some more general tools}

In order to facilitate discussing gapsets and gapset filtrations, and gather more tools to treat more cases, it is useful to consider somewhat more general subsets of $\N_+$. 

\subsection{On $m$-extensions and $m$-filtrations}\label{m-extension}

\begin{definition} Let $m \in \N_+$. An \emph{$m$-extension} is a finite set $A \subset \N_+$ containing $[1,m-1]$ and admitting a partition
\begin{equation}\label{m-ext}
A = A_0 \sqcup A_1 \sqcup \dots \sqcup A_{t}
\end{equation}
for some $t \ge 0$, where $A_0=[1,m-1]$ and $A_{i+1} \subseteq m+A_i$ for all $i \ge 0$. 
\end{definition}
In particular, an $m$-extension $A$ satisfies $A \cap m\N = \emptyset$. Moreover, the  above conditions on the $A_i$ imply 
\begin{equation}\label{A_i}
A_i=A \cap [im+1, (i+1)m-1]
\end{equation}
for all $i \ge 0$, whence the $A_i$ are \emph{uniquely determined} by $A$.

\begin{remark}\label{gapset is extension}
Every gapset of multiplicity $m$ is an $m$-extension. This follows from Proposition~\ref{subset}.
\end{remark}

\smallskip
Closely linked is the notion of \emph{$m$-filtration}.
\begin{definition}\label{filtration} Let $m \in \N_+$. An \emph{$m$-filtration} is a finite sequence $F=(F_0,F_1,\dots,F_t)$ of nonincreasing subsets of $\N_+$ such that
$$
F_0=[1,m-1] \supseteq F_1 \supseteq \dots \supseteq F_t.
$$
The \emph{genus} $g$ of $F$ is defined as $g = \sum_{i=0}^t |F_i|$.
 \end{definition}
For $m \in \N_+$, there is a straightforward bijection between $m$-extensions and $m$-partitions.

\begin{proposition}\label{fil-ext} Let $A=A_0 \sqcup A_1 \sqcup \dots \sqcup A_{t}$ be an $m$-extension. Set $F_i=-im+A_i$ for all $i$. Then
$(F_0,F_1,\dots,F_t)$ is an $m$-filtration. Conversely, let $(F_0,F_1,\dots,F_t)$ be an $m$-filtration. Set $A_i = im+F_i$ for all $i$, and let 
$$A = \bigsqcup_{i=0}^{t} A_i = \bigsqcup_{i=0}^{t} (im+F_i).$$
Then $A$ is an $m$-extension.
\end{proposition}
\begin{proof} We have $F_i=-im+A_i$ if and only if $A_i=im+F_i$.
\end{proof}

\begin{notation}\label{phi and tau} If $A$ is an $m$-extension, we denote by $F=\varphi(A)$ the $m$-filtration associated to it by Proposition~\ref{fil-ext}. Conversely, if $F$ is an $m$-filtration, we denote by $A=\tau(F)$ its associated $m$-extension. 
\end{notation}

By Proposition~\ref{fil-ext}, \emph{the maps $\varphi$ and $\tau$ are inverse to each other}.

\subsection{Gapset filtrations revisited} 

\begin{definition}
Let $G \subset \N_+$ be a gapset of multiplicity $m$. The \emph{gapset filtration} associated to $G$ is the $m$-filtration $F=\varphi(G)$.
\end{definition}
By Remark~\ref{gapset is extension}, every gapset $G$ of multiplicity $m$ is an $m$-extension, whence $\varphi(G)$ is well-defined. 

\smallskip
Concretely, let $G$ be a gapset of multiplicity $m$ and depth $q$. As in \eqref{G_i}, let $G_i = G \cap [im+1,(i+1)m-1]$ for all $i \ge 0$, so that $G_0=[1,m-1]$ and
$$
G = G_0 \sqcup \dots \sqcup G_{q-1}.
$$
The associated $m$-filtration $F=\varphi(G)$ is then given by $F=(F_0,\dots,F_{q-1})$ where $F_i = -im+G_i$ for all $i \ge 0$.

It follows from Remark~\ref{data gapsets} and the equality $|F_i|=|G_i|$ for all $i$, that the genus of $F$ is equal to $|F_0|+\dots+|F_{q-1}|$ and that its depth is equal to the number of nonzero $F_i$.

\subsection{A compact representation}

In this section, we use permutations of $[1,m-1]$ and exponent vectors to represent $m$-filtrations in a compact way. We denote by $\mathfrak{S}_{m-1}$ the symmetric group on $[1,m-1]$.

\begin{proposition}\label{compact} Let $F=(F_0,\dots,F_t)$ be an $m$-filtration. Then there exists a permutation $\sigma \in \mathfrak{S}_{m-1}$ and exponents $e_0,\dots, e_{m-2} \in \N$ such that
$$
F= (\underbrace{F'_0,\dots,F'_0}_{e_0}, \underbrace{F'_1,\dots,F'_1}_{e_1}, \dots , \underbrace{F'_{m-2},\dots,F'_{m-2}}_{e_{m-2}}),
$$
where $F'_0=[1,m-1]$ and $F'_i=F'_{i-1}\setminus\{\sigma(i)\}$ for $1 \le i \le m-2$. In particular, we have $|F'_i|=m-1-i$ for all $0 \le i \le m-2$.
\end{proposition}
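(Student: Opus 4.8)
The plan is to exploit the fact that an $m$-filtration $F=(F_0,\dots,F_t)$ is, by Definition~\ref{filtration}, a \emph{nonincreasing} sequence of subsets of $[1,m-1]$ starting at $F_0=[1,m-1]$. Since each step $F_i \supseteq F_{i+1}$ only removes elements and the whole chain is contained in the finite set $[1,m-1]$, the \emph{distinct} values taken by the sequence form a strictly decreasing chain of subsets of $[1,m-1]$ starting from $[1,m-1]$ itself. First I would record this chain of distinct values, say $[1,m-1]=B_0 \supsetneq B_1 \supsetneq \dots \supsetneq B_k$ with $k \le m-1$, and observe that $F$ is obtained from it by repeating each $B_j$ a positive number of times consecutively (the blocks are consecutive precisely because the sequence is monotone), so $F=(B_0^{a_0},B_1^{a_1},\dots,B_k^{a_k})$ with each $a_j \ge 1$.

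The next step is to refine this chain so that consecutive terms differ by exactly one element. Between $B_j$ and $B_{j+1}$ we may have $|B_j \setminus B_{j+1}| \ge 1$; I would insert the intermediate subsets obtained by deleting the elements of $B_j\setminus B_{j+1}$ one at a time, in any chosen order, thereby producing a maximal chain $[1,m-1]=F'_0 \supsetneq F'_1 \supsetneq \dots \supsetneq F'_{m-2}$ of subsets of $[1,m-1]$ with $|F'_i|=m-1-i$. (The chain has length exactly $m-1$ because each step drops the cardinality by one, from $m-1$ down to $1$; if the original chain of distinct values did not reach a singleton we simply keep deleting elements until it does, which is harmless since we will assign exponent $0$ to the newly inserted terms.) Writing $\{\sigma(i)\}=F'_{i-1}\setminus F'_i$ for $1\le i\le m-2$ defines an injection $\sigma\colon[1,m-2]\to[1,m-1]$, hence determines a unique element $\sigma\in\mathfrak{S}_{m-1}$ once we declare $\sigma(m-1)$ to be the single leftover element of $[1,m-1]$; this is exactly the required recursion $F'_i=F'_{i-1}\setminus\{\sigma(i)\}$.

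Finally I would define the exponents: set $e_i$ to be the number of indices $\ell\in[0,t]$ with $F_\ell=F'_i$. By construction every $F_\ell$ equals some $F'_i$ (the $F'_i$ include all the distinct values $B_j$), and since the $F'_i$ are pairwise distinct the multiset of terms of $F$ is partitioned correctly; monotonicity of $F$ guarantees the equal terms occur in one consecutive block and in the right order, so $F=(F'_0{}^{e_0},\dots,F'_{m-2}{}^{e_{m-2}})$ as claimed, with $e_i\in\N$ (possibly $0$ for the artificially inserted terms). The statement $|F'_i|=m-1-i$ is immediate from the cardinality drop. I expect the only slightly delicate point is the bookkeeping at the last step: one must check that a term $F_\ell$ that is repeated in $F$ really forms a single uninterrupted block, which is where monotonicity is used essentially, and that extending $\sigma$ to a genuine permutation of $[1,m-1]$ (rather than just an injection on $[1,m-2]$) is unambiguous — both are routine but worth stating explicitly.
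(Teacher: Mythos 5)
Your proposal is correct and follows essentially the same route as the paper's own proof: remove repetitions to get a strictly decreasing chain, refine it to a maximal chain dropping one element at a time, and read off $\sigma$ and the exponents $e_i$ (allowing $e_i=0$ for inserted terms). Your explicit handling of the two minor points --- extending the chain down to a singleton when needed, and completing $\sigma$ to a full permutation via the leftover element --- is a welcome touch of added care, but the argument is the same.
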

\begin{proof} By hypothesis, we have
$$
[1,m-1] = F_0 \supseteq F_1 \supseteq \dots \supseteq F_t.
$$
Equalities may occur in this chain. Removing repetitions, let
$$
[1,m-1]=H_0 \supsetneq H_1 \supsetneq \dots \supsetneq H_s 
$$
denote the underlying descending chain, i.e. with 
$$\{F_0,F_1,\dots,F_t\} = \{H_0,H_1,\dots,H_s\}$$
and $H_i \not= H_j$ for all $i \not= j$. Each $H_i$ comes with some repetition frequency $\mu_i \ge 1$ in $\{F_0,F_1,\dots,F_t\} $. Thus, we have
$$
F = (\underbrace{H_0,\dots,H_0}_{\mu_0}, \underbrace{H_1,\dots,H_1}_{\mu_1}, \dots , \underbrace{H_{s},\dots,H_{s}}_{\mu_{s}}).
$$ 
Now, between each consecutive pair $H_{i-1} \supsetneq H_i$, we insert some maximal descending chain of subsets $H'_{i,j}$, i.e.
$$
H_{i-1} = H'_{i,0} \supsetneq H'_{i,1} \supsetneq \dots \supsetneq H'_{i,k_i} = H_i,
$$
where $k_i=|H_{i-1}| - |H_i|$. Thus $|H'_{i,j}|=|H'_{i-1}|-j$ for all $0 \le j \le k_i$.

We end up with a maximal descending chain of subsets
$$
F'=[1,m-1]=F'_0 \supsetneq F'_1 \supsetneq \dots \supsetneq F'_{m-2},
$$
where each term has one less element than the preceding one, i.e. where $|F'_j| = |F'_{j-1}|-1$ for all $1 \le j \le m-2$. By construction, we have
$$
\{F_0,F_1,\dots,F_t\} = \{H_0,H_1,\dots,H_s\} \subseteq \{F'_0, F'_1, \dots, F'_{m-2}\},
$$
and each $F'_i$ arises with some frequency $e_i \ge 0$ in $\{F_0,F_1,\dots,F_t\}$. Thus
$$
F = (\underbrace{F'_0,\dots,F'_0}_{e_0}, \underbrace{F'_1,\dots,F'_1}_{e_1}, \dots , \underbrace{F'_{m-2},\dots,F'_{m-2}}_{e_{m-2}}).
$$ 
Finally, since each $F'_i$ is obtained by removing one distinct element from $F'_{i-1}$ for $1 \le i \le m-2$, there is a permutation $\sigma$ of $[1,m-1]$ such that 
$$F'_i=F'_{i-1}\setminus\{\sigma(i)\}$$ 
for $1 \le i \le m-2$.
\end{proof}

\begin{notation}\label{F'} Given $\sigma \in \mathfrak{S}_{m-1}$ and $e=(e_0,\dots,e_{m-2}) \in \N^{m-1}$ such that $e_0 \ge 1$, we denote by $F(\sigma,e)$ the $m$-filtration
$$
F = (\underbrace{F'_0,\dots,F'_0}_{e_0}, \underbrace{F'_1,\dots,F'_1}_{e_1}, \dots , \underbrace{F'_{m-2},\dots,F'_{m-2}}_{e_{m-2}})
$$
where $F'_i=F'_{i-1}\setminus\{\sigma(i)\}$ for $1 \le i \le m-2$.
\end{notation}

\begin{example}
\label{Ex4}
Consider the $5$-filtration $F=(\{1,2,3,4\},\{1,2\},\{1\})$ of Example~\ref{Ex3}. Let $\sigma=(3,4,2,1) \in \mathfrak{S}_4$ and $e=(1,0,1,1)$. Then 
$F=F(\sigma,e)$ as readily checked. Note that we also have $F=F(\sigma',e)$ where $\sigma'=(4,3,2,1)$.
\end{example}

One important question is: when is the $m$-filtration $F=F(\sigma,e)$ a \emph{gapset} filtration? The next section provides an answer.

\subsection{Complementing an $m$-extension}

\begin{notation}
Let $F=F(\sigma,e)$ be an $m$-filtration, where $\sigma \in \mathfrak{S}_{m-1}$ and $e=(e_0,\dots,e_{m-2}) \in \N^{m-1}$ with $e_0 \ge 1$. We denote by $G=G(\sigma,e)$ the corresponding $m$-extension, i.e. $G = \tau(F)$ using Notation~\ref{phi and tau}.
\end{notation}

Here is how to determine the set complement in $\N$ of the $m$-extension $G=G(\sigma,e)$.

\begin{proposition}
\label{complement of extension}
Let $F=F(\sigma,e)$ be an $m$-filtration, where $\sigma \in \mathfrak{S}_{m-1}$ and $e=(e_0,\dots,e_{m-2}) \in \N^{m-1}$ with $e_0 \ge 1$. Let $G=G(\sigma,e)$ be the corresponding $m$-extension, i.e. $G = \tau(F)$. Then
\begin{equation}
\label{SF}
\N \setminus G=\bigsqcup_{i=0}^{m-1} \sigma(i)+m(e_0+\dots+e_{i-1}+\N),
\end{equation}
with the conventions $\sigma(0)=0$ and $e_0+\dots+e_{i-1}=0$ for $i=0$.
\end{proposition}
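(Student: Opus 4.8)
The plan is to compute $\N\setminus G$ directly from the definition of $G=\tau(F)$ by a disjoint-union bookkeeping over the "blocks" of the filtration. First I would recall that, by Notation~\ref{F'} and Proposition~\ref{fil-ext}, we have $G=\bigsqcup_{k} (km+F_k)$, where the sequence $(F_k)$ consists of $e_0$ copies of $F'_0=[1,m-1]$, then $e_1$ copies of $F'_1=F'_0\setminus\{\sigma(1)\}$, and so on, with $F'_i=F'_{i-1}\setminus\{\sigma(i)\}$ for $1\le i\le m-2$. Equivalently, for each residue $r\in[1,m-1]$, the value $r$ is "alive" in layer $k$ (i.e. $km+r\in G$) exactly as long as $\sigma^{-1}(r)$ has not yet been reached among the exponents consumed so far. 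The key observation is: if $r=\sigma(i)$, then $r\in F'_j$ precisely for $0\le j\le i-1$, so $r$ occurs in the layers indexed $k=0,1,\dots,(e_0+\dots+e_{i-1})-1$ and in no later layer. Hence $km+\sigma(i)\in G$ iff $k<e_0+\dots+e_{i-1}$, and $km+\sigma(i)\notin G$ iff $k\ge e_0+\dots+e_{i-1}$.

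Next I would assemble these facts residue by residue. Since $G\cap m\N=\emptyset$ (an $m$-extension never meets $m\N$), every nonnegative integer not in $G$ is either a multiple of $m$ or congruent to some $\sigma(i)$ with $1\le i\le m-1$ modulo $m$. For the residue $0$, all of $m\N=0+m\N$ lies in $\N\setminus G$, matching the $i=0$ term with the stated conventions $\sigma(0)=0$ and empty sum $=0$. For each $i\in[1,m-1]$, the integers $\equiv\sigma(i)\pmod m$ lying outside $G$ are exactly those of the form $\sigma(i)+km$ with $k\ge e_0+\dots+e_{i-1}$, i.e. the set $\sigma(i)+m(e_0+\dots+e_{i-1}+\N)$. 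Taking the union over $i=0,\dots,m-1$ gives \eqref{SF}; disjointness is immediate because the sets on the right-hand side lie in pairwise distinct residue classes modulo $m$ (the $\sigma(i)$ for $0\le i\le m-1$ are exactly $0,1,\dots,m-1$ in some order).

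The one point requiring a little care — and what I expect to be the main obstacle, if anything — is the precise indexing of which layers a given residue survives in, since the definition of $F(\sigma,e)$ nests the exponents $e_i$ against the shrinking sets $F'_i$, and an off-by-one error in "$k<e_0+\dots+e_{i-1}$" versus "$k\le\cdots$" would break the formula. I would nail this down by checking the boundary cases explicitly: $i=1$ gives survival exactly in layers $0,\dots,e_0-1$ (the copies of $F'_0=[1,m-1]$, which all contain $\sigma(1)$) and death from layer $e_0$ onward, matching $\sigma(1)+m(e_0+\N)$; and $i=m-1$ gives $\sigma(m-1)$ surviving only in the layers before the last block, consistent with $F'_{m-2}$ being the unique singleton $\{\sigma(m-1)\}$. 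A clean way to present the whole argument is simply to verify that the right-hand side of \eqref{SF} is the complement of $G$ by showing, for an arbitrary $n\in\N$ written as $n=km+r$ with $0\le r\le m-1$, that $n\in G\iff r=\sigma(i)$ for some $i\ge 1$ and $k<e_0+\dots+e_{i-1}$, which is exactly the negation of membership in the stated union.
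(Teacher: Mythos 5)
Your proposal is correct and follows essentially the same route as the paper: both arguments analyze $G=\tau(F)$ residue class by residue class modulo $m$, use the fact that $\sigma(i)\in F'_j$ exactly for $j\le i-1$ to conclude that $\sigma(i)+km\in G$ if and only if $k<e_0+\dots+e_{i-1}$, and then pass to complements, with disjointness coming from the distinct residue classes. Your boundary-case checks confirm the same off-by-one bookkeeping that the paper handles via its explicit layered decomposition of $G$.
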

\begin{proof} For $0 \le i \le m-1$, denote $F_i=[1,m-1] \setminus \{\sigma(0), \dots, \sigma(i)\}$. Thus $F_0=[1,m-1]$, $F_1=[1,m-1] \setminus \{\sigma(1)\}$, $F_2=[1,m-1] \setminus \{\sigma(1), \sigma(2)\}$ and so on. By definition of $F=F(\sigma, e)$, we have
$$
F = (\underbrace{F_0,\dots,F_0}_{e_0}, \underbrace{F_1,\dots,F_1}_{e_1}, \dots , \underbrace{F_{m-2},\dots,F_{m-2}}_{e_{m-2}}).
$$
Let $G=\tau(F)$. For $k \in [0,m-1]$, set
$\displaystyle
G^{(k)} = \{x \in G \mid x \equiv k \bmod m\}.
$
Then $\displaystyle G = \bigsqcup_{k=0}^{m-1} G^{(k)}$. Since $G$ is an $m$-extension, we have $G \cap m\N = \emptyset$, i.e. $G^{(0)}=\emptyset$. We now proceed to determine $G^{(k)}$ for $k \ge 1$.  
Since $\sigma$ is a permutation of $[1,m-1]$, there exists $i \in [1,m-1]$ such that $k=\sigma(i)$. We claim that
\begin{equation}\label{G^k}
G^{(k)} = G^{(\sigma(i))} = \sigma(i)+m[0, e_0+\dots + e_{i-1}-1].
\end{equation}
Indeed by construction, for all $r \ge 0$ we have 
\begin{equation}\label{F_r}
\sigma(i) \in F_r \Leftrightarrow r \le i-1.
\end{equation}
Now, by definition of the map $\tau$, we have
\begin{eqnarray}\label{G}
G & = & \bigsqcup_{l=0}^{m-2} \big(   \bigsqcup_{j=e_0+\dots+e_{l-1}}^{e_0+\dots+e_{l}-1} (jm+F_l)  \big).
\end{eqnarray}
It follows from \eqref{F_r} and \eqref{G} that 
$$
\sigma(i)+jm \in G \Leftrightarrow j < e_0+\dots + e_{i-1}
$$
for all $j \ge 0$. This proves \eqref{G^k}. Taking the complement in $\N$, it follows that 
$$
\sigma(i)+jm \in \N \setminus G \Leftrightarrow j \ge e_0+\dots + e_{i-1}.
$$
This proves \eqref{SF}.
\end{proof}

\begin{notation}\label{S(sigma,e)} Given $\sigma \in \mathfrak{S}_{m-1}$ and $e=(e_0,\dots,e_{m-2}) \in \N^{m-1}$ with $e_0 \ge 1$, we denote
$$
S(\sigma, e) = \bigsqcup_{i=0}^{m-1} \sigma(i)+m(e_0+\dots+e_{i-1}+\N).
$$
\end{notation}
Thus, the above proposition amounts to the statement
$$
\N = G(\sigma, e) \sqcup S(\sigma, e)
$$
for all $\sigma \in \mathfrak{S}_{m-1}$ and $e=(e_0,\dots,e_{m-2}) \in \N^{m-1}$ with $e_0 \ge 1$.

This yields the following way to construct all gapsets of given multiplicity $m \ge 3$.

\begin{proposition}\label{S(sigma,e)} Let $m \ge 2$. Every numerical semigroup $S$ of multiplicity $m$ is of the form $S=S(\sigma,e)$ for some $\sigma\in\mathfrak{S}_{m-1}$ and $e=(e_0,\dots,e_{m-2})\in \N^{m}$ with $e_0 \ge 1$.
\end{proposition}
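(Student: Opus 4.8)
The plan is to start from a numerical semigroup $S$ of multiplicity $m$, form its complement $G = \N \setminus S$, recognize $G$ as a gapset of multiplicity $m$, and then apply the machinery of Propositions~\ref{subset}, \ref{fil-ext}, \ref{compact} and~\ref{complement of extension} to exhibit the required $\sigma$ and $e$.

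First I would recall that $G = \N \setminus S$ is a gapset of multiplicity $m$ by Remark~\ref{gapset is extension}, hence an $m$-extension, so its associated $m$-filtration $F = \varphi(G)$ is well defined. By Proposition~\ref{compact} applied to $F$, there exist a permutation $\sigma \in \mathfrak{S}_{m-1}$ and exponents $e = (e_0,\dots,e_{m-2}) \in \N^{m-1}$ with $e_0 \ge 1$ (since $F_0 = [1,m-1]$ occurs at least once) such that $F = F(\sigma,e)$. Then $G = \tau(F) = G(\sigma,e)$ by the very definitions of $\tau$ and of $G(\sigma,e)$. Now Proposition~\ref{complement of extension}, or equivalently the identity $\N = G(\sigma,e) \sqcup S(\sigma,e)$ recorded after Notation~\ref{S(sigma,e)}, gives $S(\sigma,e) = \N \setminus G(\sigma,e) = \N \setminus G = S$, which is exactly the claimed form.

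The one point that needs a little care, and which I expect to be the only mild obstacle, is the discrepancy in the statement: it allows the edge case $m = 2$ and it writes $e \in \N^{m}$ rather than $\N^{m-1}$. For $m = 2$ one has $\mathfrak{S}_{1}$ trivial and the only freedom is $e = (e_0)$ with $e_0 \ge 1$; here the unique numerical semigroup of multiplicity $2$ and genus $g$ is $\langle 2, 2g+1\rangle$, which is $S(\mathrm{id}, (g))$, so the case is handled directly. For $m \ge 3$ the argument of the previous paragraph applies verbatim. As for the exponent vector, Proposition~\ref{compact} produces $e \in \N^{m-1}$, and one simply notes that padding with trailing zeros (or the harmless abuse of allowing one extra coordinate that plays no role, since $F'_{m-1}$ would be empty) reconciles this with the stated $\N^{m}$; I would phrase the conclusion using $\N^{m-1}$ and remark that extra zero coordinates may be appended if desired.

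Thus the proof is essentially a bookkeeping assembly of the four preceding propositions, with the genuine content already contained in Proposition~\ref{complement of extension}. I would write it in three or four sentences: invoke Remark~\ref{gapset is extension} to get the gapset/$m$-extension, invoke Proposition~\ref{compact} to get $(\sigma, e)$, identify $G = G(\sigma,e)$, and conclude $S = S(\sigma,e)$ from the complementation identity, dispatching $m = 2$ separately.
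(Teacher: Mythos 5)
Your proposal is correct and follows essentially the same route as the paper's proof: pass to the gapset $G=\N\setminus S$, apply Proposition~\ref{compact} to its filtration $\varphi(G)$ to obtain $(\sigma,e)$, and conclude via $G=\tau(F)=G(\sigma,e)$ and the complementation identity. Your extra remarks on the $m=2$ case and on $\N^{m}$ versus $\N^{m-1}$ correctly identify minor slips in the statement that the paper's own proof passes over silently.
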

\begin{proof} Let $S$ be a numerical semigroup of multiplicity $m$. Let $G = \N \setminus S$ and $F=\varphi(G)$ be the associated gapset and gapset filtration, respectively. Then $F$ is an $m$-filtration, whence by Proposition~\ref{compact}, it is of the form $F=F(\sigma,e)$ for some $\sigma$ and $e$ of the desired type. Then $G=\tau(F)=G(\sigma,e)$, whence $S=\N \setminus G=S(\sigma,e)$. 
\end{proof}

\smallskip
We now determine the conditions under which a set of the form $S(\sigma,e)$ is a numerical semigroup.

\begin{theorem}
\label{thrm}
Let $m \ge 3$. Let $\sigma\in\mathfrak{S}_{m-1}$ and $e=(e_0,\dots,e_{m-2})\in \N^{m}$ with $e_0 \ge 1$. Then $S(\sigma,e)$ is a numerical semigroup if and only if for all $1 \le i,j,k \le m-1$ with $i \le j < k$, we have
\begin{equation*}
e_{j}+\cdots+e_{k-1} \le
\left\{
\begin{array}{ll}
e_0+\cdots+e_{i-1}  & \mbox{if }\, \sigma(i)+\sigma(j) = \sigma(k),\\
e_0+\cdots+e_{i-1}+1 & \mbox{if }\, \sigma(i)+\sigma(j) = \sigma(k)+m.
\end{array}
\right.
\end{equation*}
\end{theorem}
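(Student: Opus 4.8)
The statement asserts that $S = S(\sigma,e)$ is a numerical semigroup exactly when a certain finite list of inequalities holds. Since $S$ always contains $0$ (take $i=0$ in the definition, giving $\sigma(0)+m\N = m\N \ni 0$... actually $0 = 0 + m\cdot 0$, yes) and always has finite complement in $\N$ (it is the complement of the finite set $G(\sigma,e)$ by Proposition~\ref{complement of extension}), the only property to analyze is \emph{stability under addition}. So the plan is to translate ``$S + S \subseteq S$'' into arithmetic conditions on the exponent vector $e$ relative to $\sigma$.

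\textbf{Main steps.} First I would fix notation: for $1 \le i \le m-1$ write $E_i = e_0 + \cdots + e_{i-1}$ (and $E_0 = 0$), so that by Notation~\ref{S(sigma,e)} an element of $S(\sigma,e)$ congruent to $\sigma(i) \bmod m$ (for $i \ge 1$) is of the form $\sigma(i) + m(E_i + a)$ with $a \ge 0$, while the elements congruent to $0$ are exactly $m\N$. Next, observe that adding an element of $m\N$ to anything in $S$ stays in $S$ (since $E_i + a + a'$ still ranges over $E_i + \N$), so stability under addition reduces to checking sums $s + s'$ where both $s,s'$ lie in nonzero residue classes. Write $s = \sigma(i) + m(E_i + a)$ and $s' = \sigma(j) + m(E_j + b)$ with $a,b \ge 0$ and $1 \le i,j \le m-1$; by symmetry assume $i \le j$. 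There are two cases according to whether $\sigma(i) + \sigma(j) < m$ or $\ge m$. If $\sigma(i)+\sigma(j) = \sigma(k)$ for some $1 \le k \le m-1$ (the first case — note it forces $\sigma(k) > \sigma(i), \sigma(j)$ hence, among indices, we cannot have $k \le i$ or $k \le j$ in general, but the key point is just that $k$ is whatever index realizes that residue), then $s + s' = \sigma(k) + m(E_i + E_j + a + b)$, and this lies in $S$ iff $E_i + E_j + a + b \ge E_k$ for all $a, b \ge 0$, i.e. iff $E_k \le E_i + E_j$, i.e. (telescoping, assuming WLOG $i \le j$) $e_j + \cdots + e_{k-1} \le E_i$ when $j < k$, with the inequality being automatic when $k \le j$. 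Similarly, if $\sigma(i) + \sigma(j) = \sigma(k) + m$, then $s+s' = \sigma(k) + m(E_i + E_j + a + b + 1)$, lying in $S$ iff $E_k \le E_i + E_j + 1$, giving the ``$+1$'' variant. Collecting these over all valid triples $i \le j < k$ yields precisely the displayed condition; I would remark that the cases $k \le j$ impose nothing (the inequality $E_k \le E_i + E_j$ or $E_i+E_j+1$ holds trivially since $E$ is nondecreasing and $E_i \ge 0$, resp.\ $\ge E_i \ge E_k$ when... — a quick monotonicity check), which is why the statement restricts to $j < k$.

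\textbf{Expected obstacle.} The conceptual content is short, but the bookkeeping requires care in two places. First, one must be careful that ``$\sigma(i) + \sigma(j) = \sigma(k)$'' genuinely pins down a unique $k$ and that the residue of $s + s'$ modulo $m$ is correctly identified (including the carry that produces the ``$+m$'' case and hence the ``$+1$''). Second, and more delicate, is verifying that the triples with $k \le j$ (or $k \le i$) really do impose no constraint, so that restricting to $i \le j < k$ loses nothing: this is where I would need the monotonicity $E_0 \le E_1 \le \cdots \le E_{m-1}$ together with $E_i \ge E_0 = 0$, and in the ``$=\sigma(k)$'' subcase the fact that if $k \le j$ then $E_k \le E_j \le E_i + E_j$ automatically. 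Conversely — the ``only if'' direction — one must check that a \emph{violation} of any listed inequality actually produces an element of $S+S$ outside $S$; this is immediate from the ``iff'' in each case above, taking $a = b = 0$. Handling the degenerate indices ($i = j$, or small $m$) and making the telescoping rewrite $E_k - E_i = e_i + \cdots + e_{k-1}$ match the asymmetric form $e_j + \cdots + e_{k-1} \le E_i$ stated in the theorem (which uses $i \le j$) is the last routine-but-fiddly point.
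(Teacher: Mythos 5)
Your proposal is correct and follows essentially the same route as the paper: reduce to stability under addition, compute the sum of two residue classes $\bigl(\sigma(i)+m(E_i+\N)\bigr)+\bigl(\sigma(j)+m(E_j+\N)\bigr)$, and read off the condition $E_k \le E_i+E_j$ (resp. $E_k\le E_i+E_j+1$ in the carry case), observing that the triples with $k \le j$ are vacuous by monotonicity of the partial sums. The one point to add is the case $\sigma(i)+\sigma(j)=m$, which your dichotomy ($<m$ versus $\ge m$ with a carry producing some $\sigma(k)+m$) silently skips since then no $k\in[1,m-1]$ realizes the residue; there the sum lands in $m\N\subseteq S$ and imposes no condition, exactly as the paper records in a separate third case.
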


\begin{proof} Denote $S_0=\N$ and $S_i=\sigma(i)+m(e_0+\dots+e_{i-1}+\N)$ for $1 \le i \le m-1$. Let $S'=S(\sigma,e)$. Then
$$
S' = \bigsqcup_{i=0}^{m-1} S_i
$$
by definition. We have $0 \in S_0 \subset S'$. The complement of $S'$ in $\N$ is finite, since $N \setminus S(\sigma,e)=G(\sigma,e)$. It remains to prove that $S'$ is stable under addition if and only if the stated inequalities are satisfied.

Let $i,j$ be integers such that $0 \le i \le j \le m-1$. If $i=0$ then $S_i+S_j=S_j+m\N=S_j$.
We now assume $i\not=0$. There are three cases.

\smallskip
\textbf{$\bullet$ Case} $\sigma(i)+\sigma(j)\leq m-1$. There exists $k\in [1,m-1]$ satisfying  $\sigma(k)=\sigma(i)+\sigma(j)$. Then
\begin{align*}
S_i+S_j&=\sigma(i)+m(e_0+\dots+e_{i-1}+\N)+\sigma(j)+m(e_0+\dots+e_{j-1}+\N)\\
&=\sigma(k)+m(e_0+\dots+e_{i-1}+e_0+\dots+e_{j-1}+\N).
\end{align*}
Therefore $S_i+S_j$ is contained in $S'$ if and only if it is contained in $S_k$, and this occurs if and only if 
$$
e_0+\dots+e_{k-1}\leq e_0+\dots+e_{i-1}+e_0+\dots+e_{j-1}.
$$
This condition is plainly satisfied if $k < j$, and is equivalent to
$$
e_j+\dots+e_{k-1}\leq e_0+\dots+e_{i-1}
$$
if $k > j$.

\smallskip
\textbf{$\bullet$ Case} $\sigma(i)+\sigma(j)\geq m+1$. There exists $k\in [1,m-1]$ satisfying  $\sigma(k)+m=\sigma(i)+\sigma(j)$. Then
\begin{align*}
S_i+S_j&=\sigma(i)+m(e_0+\dots+e_{i-1}+\N)+\sigma(j)+m(e_0+\dots+e_{j-1}+\N)\\
&=\sigma(k)+m(e_0+\dots+e_{i-1}+e_0+\dots+e_{j-1}+1+\N).
\end{align*}
Again, $S_i+S_j$ is contained in $S'$ if and only if it is contained in $S_k$, and this occurs if and only if 
$$
e_0+\dots+e_{k-1}\leq e_0+\dots+e_{i-1}+e_0+\dots+e_{j-1}+1.
$$
This is plainly satisfied if $j < k$, and is equivalent to 
$$e_j+\dots+e_{k-1}\leq e_0+\dots+e_{i-1}+1$$ otherwise.

\smallskip
\textbf{$\bullet$ Case} $\sigma(i)+\sigma(j)=m$. Then $S_i+S_j\subseteq m\N=S_0 \subset S'$.
\end{proof}

\begin{remark} For a gapset filtration $F=F(\sigma, e)$ of multiplicity $m$, there is a strong connection between its exponent vector $e \in \N^{m-1}$ and the Kunz coordinates of the associated numerical semigroup $S(\sigma,e)$. 
\end{remark}
Indeed, let $S$ be a numerical semigroup of multiplicity $m$. Recall that the Ap\'ery set of $S$ is $\text{Ap}(S)=\{x\in\, |\, x-m\not\in S\}$. By Lemma 1.4 of~\cite{RGGJ1}, we have $\text{Ap}(S)=\{0=w(0),w(1),\dots,w(m-1)\}$ where $w(i)$ is the smallest element of $S$ which is congruent to $i$ modulo $m$.
Hence for $i\in[0,m-1]$ there exist $k_i\in\N$ such that $w(i)=i+mk_i$. 
The integers $k_1,\dots,k_{m-1}$ are the \emph{Kunz coordinates} of $S$.
From \eqref{SF}, we obtain that the smallest element of $S(\sigma,e)$ which is congruent to $\sigma(i)$ modulo $m$ is $\sigma(i)+m(e_0+\dots+e_{i-1})$. Hence for all $i\in[1,m-1]$, we have
$$
k_{\sigma(i)}=e_0+\dots+e_{i-1}.
$$

\subsection{The insertions maps $f_i$}

Let $m \ge 3$ and let $F=(F_0,\dots, F_t)$ be an $m$-filtration, i.e. with
$$
[1,m-1]=F_0 \supseteq F_1 \supseteq \dots \supseteq F_t.
$$
Let $g=\sum_{j=0}^t |F_j|$ be the genus of $F$. Given $i \in [1,m-1]$, we wish to insert $i$ in $F$ so as to end up with an $m$-filtration of genus $g+1$. There is only one way to do this, namely to insert $i$ in the first $F_j$ for which $i \notin F_j$. More formally, we define $f_i(F)$ as follows:
\begin{itemize}
\item If $i \in F_s \setminus F_{s+1}$ for some $s \le t-1$, then $f_i(F) = (F'_0, \dots, F'_t)$
where
$$
F'_j = \left\{
\begin{array}{lcl}
F_j & \textrm{if} & j \not= s+1, \\
F_{s+1} \sqcup \{i\} & \textrm{if} & j = s+1.
\end{array}
\right.
$$ 
\item If $i \in F_t$, then $f_i(F) = (F_0, \dots, F_t, F_{t+1})$ where $F_{t+1}=\{i\}$.
\end{itemize}
By construction, for all $i \in [1,m-1]$, we have that $f_i(F)$ is an $m$-filtration of genus $g+1$.

\smallskip
One delicate question is the following. If $F$ is a \emph{gapset} filtration of multiplicity $m$, for which $i \in [1,m-1]$ does it hold that $f_i(F)$ remains a gapset filtration? This question was successfully addressed in Section~\ref{section m=3} for $m=3$. 

\section{The case $m=4$}

We now use the above tools to characterize all gapset filtrations of multiplicity $m=4$ and to derive a counting-free proof of the inequality $n_{g+1, 4} \ge n_{g, 4}$ for all $g \ge 0$.

\smallskip
Let $F$ be a gapset filtration of multiplicity $m=4$. By Proposition~\ref{compact}, there exists $\sigma \in \mathfrak{S}_3$ and $e=(a,b,c) \in \N^3$ with $a \ge 1$ such that $F=F(\sigma,e)$. Moreover, Theorem~\ref{thrm} gives the exact conditions for $S(\sigma,e)$ to be a numerical semigroup, i.e. for $F(\sigma,e)$ to be a gapset filtration. This yields the following characterization, where the six elements of $\mathfrak{S}_3$ are displayed in window notation.

\begin{theorem}
\label{thm m=4}
The gapset filtrations of multiplicity $m=4$ are exactly the filtrations $F = F(\sigma,e)$ given in the table below, with $\sigma \in \mathfrak{S}_3$ and $e=(a,b,c) \in \N^3$ such that $a \ge 1$ and subject to the conditions below: 
\begin{equation}
\label{table}
\begin{array}{l|c|l}
\text{$\sigma \in \mathfrak{S}_3$ }& F = F(\sigma,e) &\emph{conditions on } a,b,c \\ 
\hline
(1,2,3)&(123)^a(23)^b(3)^c& b\leq a, \ c\leq a \\ 
(1,3,2)&(123)^a(23)^b(2)^c& b+c\leq a\\ 
(2,1,3)&(123)^a(13)^b(3)^c& c\leq a\\ 
(2,3,1)&(123)^a(13)^b(1)^c& c\leq a+1\\ 
(3,1,2)&(123)^a(12)^b(2)^c& b+c\leq a+1, \ c\leq a+b\\ 
(3,2,1)&(123)^a(12)^b(1)^c& b\leq a+1, \ c\leq a+1
\end{array}
\end{equation}
\end{theorem}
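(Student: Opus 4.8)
The plan is to apply Theorem~\ref{thrm} mechanically to each of the six permutations $\sigma \in \mathfrak{S}_3$, since every gapset filtration of multiplicity $4$ has the form $F(\sigma,e)$ with $e=(a,b,c)\in\N^3$, $a\ge 1$, by Proposition~\ref{compact}, and $F(\sigma,e)$ is a gapset filtration precisely when $S(\sigma,e)$ is a numerical semigroup. So the whole statement reduces to: for each fixed $\sigma$, unwind the inequality system of Theorem~\ref{thrm} and check that it simplifies to the two (or one) conditions recorded in the table. First I would record, for each $\sigma$, the three values $\sigma(1),\sigma(2),\sigma(3)$ and then enumerate the triples $(i,j,k)$ with $1\le i\le j<k\le 3$ — there are only the triples $(1,1,2)$, $(1,1,3)$, $(1,2,3)$, $(2,2,3)$ — and for each determine which of the three alternatives in Theorem~\ref{thrm} is active: $\sigma(i)+\sigma(j)=\sigma(k)$, or $\sigma(i)+\sigma(j)=\sigma(k)+4$, or neither (in which case that triple imposes no constraint). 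With $m=4$ the partial sums are $e_0+\dots+e_{i-1}$ equal to $0$, $a$, $a+b$ for $i=1,2,3$ respectively, so every inequality becomes a linear relation among $a,b,c$.

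Next I would carry this out row by row. Take $\sigma=(1,2,3)$, i.e. $\sigma(1)=1,\sigma(2)=2,\sigma(3)=3$: the triple $(1,1,2)$ gives $\sigma(1)+\sigma(1)=2=\sigma(2)$, hence $e_1\le e_0$, that is $b\le a$; the triple $(1,2,3)$ gives $1+2=3=\sigma(3)$, hence $e_2\le e_0$, that is $c\le a$; the triples $(1,1,3)$ ($1+1=2\ne 3$ and $\ne 7$) and $(2,2,3)$ ($2+2=4\ne 3$ but $=3+1$, not $3+4$) impose nothing. That reproduces the first line. I would then do the same bookkeeping for $(1,3,2)$, $(2,1,3)$, $(2,3,1)$, $(3,1,2)$, $(3,2,1)$; in each case the active triples produce exactly the listed conditions — e.g. for $(3,1,2)$ one gets $\sigma(1)+\sigma(2)=3+1=4=\sigma(3)+4$ forcing $e_1+\dots+e_{k-1}\le e_0+1$ hence $b+c\le a+1$ (here $i=1$, $j=2$, $k=3$? one must be careful that $j<k$), and another triple giving $c\le a+b$. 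The one genuine subtlety is respecting the constraint $i\le j<k$ in Theorem~\ref{thrm}: since $i$ can equal $j$ but $k$ must be strictly larger, and since $\sigma$ is only a relabeling, I must be careful to pick the correct ordered triple of \emph{positions} (not values) when $\sigma(i)+\sigma(j)$ happens to equal $\sigma(k)$ or $\sigma(k)+4$, and to check I have not missed a valid position triple or included an invalid one.

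I also would note the translation given just before the statement between the compact notation $(123)^a(23)^b(3)^c$ and the explicit $m$-filtration: with $F'_0=\{1,2,3\}$ and $F'_i=F'_{i-1}\setminus\{\sigma(i)\}$, the permutation $(1,2,3)$ removes $1$ then $2$ then $3$, so $F'_1=\{2,3\}$ and $F'_2=\{3\}$, matching the displayed $(123)^a(23)^b(3)^c$; similarly for the other five rows. This is purely a consistency check on the notation, so it is quick.

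The main obstacle, such as it is, is not conceptual but organizational: making sure that for each of the six permutations every relevant position triple $(i,j,k)$ with $i\le j<k$ is accounted for, that the correct branch of Theorem~\ref{thrm} is selected, and that conditions which are automatically implied (for instance a bound $e_j+\dots+e_{k-1}\le(\dots)$ that is vacuous because $j<k$ forces nonnegativity only, or conditions subsumed by $a\ge 1$ together with another inequality) are recognized as redundant and dropped, so that the final list is exactly the minimal one in the table. I expect no surprises beyond careful casework; the proof is a finite verification driven entirely by Theorem~\ref{thrm} and Proposition~\ref{compact}.
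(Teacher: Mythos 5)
Your proposal is correct and follows exactly the paper's route: reduce to $F(\sigma,e)$ via Proposition~\ref{compact}, then apply Theorem~\ref{thrm} case by case over the six permutations (the paper works out only $\sigma=(1,3,2)$ and leaves the rest to the reader, while you correctly identify the four relevant position triples $(1,1,2)$, $(1,1,3)$, $(1,2,3)$, $(2,2,3)$ and verify the row $(1,2,3)$ in full). One small slip to fix in your $(3,1,2)$ illustration: there $\sigma(1)+\sigma(2)=4=m$, which imposes no condition; the bound $b+c\le a+1$ actually comes from the triple $(1,1,3)$, since $\sigma(1)+\sigma(1)=6=\sigma(3)+4$, while $c\le a+b$ comes from $(2,2,3)$ --- the final conditions you state are nevertheless the right ones.
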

\begin{proof} Consider for instance the case $\sigma=(1,3,2)$. 
We have $\sigma(1)+\sigma(1)=\sigma(3)$ and $\sigma(2)+\sigma(2)=\sigma(3)+m$. Hence, by Theorem~\ref{thrm}, the conditions on $e=(a,b,c)$ for $S=S(\sigma,e)$ to be a numerical semigroup, i.e. for $F=F(\sigma,e)$ to be a gapset filtration, are exactly $b+c\leq a$ and $c\leq a+b+1$. Since the latter condition is implied by the former one, it may be ignored. We end up with the sole condition $b+c\leq a$, as stated in the table. The proof in the five other cases is again a straightforward application of Theorem~\ref{thrm} and is left to the reader.
\end{proof}

\begin{corollary}
\label{CI4} For all $g \ge 0$, there is an explicit injection
$$\fc(g,4) \longrightarrow \fc(g+1,4).$$
In particular, we have $n_{g+1,4} \ge n_{g,4}$ for all $g \ge 0$.
\end{corollary}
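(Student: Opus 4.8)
The plan is to mimic the strategy of Corollary~\ref{T3}, building the injection $\fc(g,4)\to\fc(g+1,4)$ out of the three insertion maps $f_1,f_2,f_3$ introduced in Section~5.4. For a gapset filtration $F=F(\sigma,e)$ with $e=(a,b,c)$, inserting the symbol $i$ has the concrete effect on the exponent vector of bumping up by one whichever of $a,b,c$ corresponds to the first block not already containing $i$, while possibly permuting $\sigma$; the six rows of Theorem~\ref{thm m=4} let one read off exactly when the result still satisfies the relevant inequalities. So the first step is, for each of the six conjugacy-window forms of $\sigma$ and each $i\in\{1,2,3\}$, to tabulate the image $f_i(F)$ (its new $\sigma$ and new $(a,b,c)$) and the precise condition on $a,b,c$ under which $f_i(F)\in\fc(g+1,4)$. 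This is a finite, mechanical bookkeeping task using Theorem~\ref{thm m=4}.

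Next I would identify, for each starting form, the ``bad set'' of exponent vectors where a given $f_i$ fails to land in $\fc(g+1,4)$ --- e.g. for $(1,2,3)$ form $(123)^a(23)^b(3)^c$, inserting a $3$ is always fine (it only increases $c$ up to... wait, it bumps $c$, needing $c+1\le a$), whereas inserting a $1$ passes through $(123)^{a+1}(23)^{b-1}(3)^{c-1}$-type moves, etc. The key structural point, exactly as in the $m=3$ case, is that the various $f_i$ fail on disjoint, ``thin'' subsets (typically a single boundary hyperplane like $c=a$ or $b=a$ within the polytope cut out by the row's inequalities), so that by choosing $f=f_{i}$ with $i$ depending on the residue class of $g$ modulo some small modulus --- here one expects modulus $4$, since $g=3a+2b+c$ counts genus and the boundary cases occur at controlled residues --- one always has at least one $f_i$ available that is injective on all of $\fc(g,4)$. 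Each individual $f_i$ is injective by construction (it is a left-inverse of ``delete the last occurrence of $i$''), so the only thing to check is that the chosen branch is genuinely defined on the whole of $\fc(g,4)$ for that residue class.

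The main obstacle I anticipate is case (3,1,2), the row $(123)^a(12)^b(2)^c$ with the two simultaneous constraints $b+c\le a+1$ and $c\le a+b$, together with case (3,2,1) which has $b\le a+1$ and $c\le a+1$: these are the ``two-inequality'' rows, so an insertion that helps one inequality may violate the other, and one must verify that across all residues of $g$ there is still a safe choice --- possibly the safe choice is to insert the symbol $\sigma^{-1}(m-1)=\sigma^{-1}(3)$, i.e. to bump the exponent of the last block, which tends to be the benign move. A secondary subtlety is that insertion can change $\sigma$ (e.g. turning a $(3,1,2)$-filtration into a $(3,2,1)$- or $(1,\dots)$-one when a block drops out or a new minimal block appears), so the ``image form'' must be recomputed correctly; the compact representation $F(\sigma,e)$ is only unique up to the ambiguity noted in Example~\ref{Ex4}, and I would pin down a canonical choice before tabulating. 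Once the six tables and the residue-indexed selection rule are in place, assembling them into a single well-defined injective map $f\colon\fc(g,4)\to\fc(g+1,4)$, and hence $n_{g+1,4}\ge n_{g,4}$, is immediate.
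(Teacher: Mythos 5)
Your plan is essentially the paper's own proof: the paper also builds the injection from the insertion maps, tabulates via Theorem~\ref{thm m=4} exactly when $f_i(F)$ stays a gapset filtration, observes that the failure locus of each chosen $f_i$ is confined to a single residue class of $g$, and selects the map accordingly. The only differences are matters of outcome rather than method --- the paper finds that $f_1$ and $f_3$ alone suffice, with the selection governed by the parity of $g$ (not a modulus $4$ condition, and $f_2$ is never needed) --- so carrying out the bookkeeping you describe would reproduce the published argument.
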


\begin{proof} The statement is trivial for $g \le 2$ since $\fc(g,4) = \emptyset$ in this case. Assume now $g \ge 3$. Let $F \in \fc(g,4)$ be a gapset filtration of genus $g$. Write $F=F(\sigma,e)$ for some $\sigma \in \mathfrak{S}_3$ and $e=(a,b,c) \in \N^3$ with $a \ge 1$. For $i=1,3$, consider the $4$-filtrations $F'=f_1(F)$ and $F''=f_3(F)$ of genus $g+1$ obtained by the insertion maps $f_1$ and $f_3$, respectively. Then $F'=F(\sigma,e')$ where
$$
e'=\begin{cases}
(a+1,b-1,c)&\text{if $\sigma\in\{(1,2,3),(1,3,2)\}$,}\\
(a,b+1,c-1)&\text{if $\sigma\in\{(2,1,3),(3,1,2)\}$,}\\
(a,b,c+1)&\text{if $\sigma\in\{(2,3,1),(3,2,1)\}$.}
\end{cases}
$$
It follows from \eqref{table} that $F'$ fails to be a gapset filtration, i.e. $F' \notin \fc(g+1,4)$, if and only  $\sigma = (2,3,1)$ or $(3,2,1)$ and $e=(a,b,a+1)$. This corresponds to $F$ being one of 
$$
(123)^a(13)^b(1)^{a+1}\ \text{or}\ (123)^a(23)^b(1)^{a+1}.
$$
Here $g=3a+2b+a+1=4a+2b+1$, whence $g$ is odd. In particular, if $g \not\equiv 1 \bmod 2$, then $F'$ is always a gapset filtration. We conclude that, whenever $g$ is \emph{even}, then $f_1$ yields a well-defined injection 
$$
\fc(g,4) \longrightarrow \fc(g+1,4).
$$
Let us now turn to $F''=f_3(F)$. Then $F''=F(\sigma, e'')$ where $e''$ is easily described by a table similar to \eqref{table}. Omitting details, it follows that $F''$ fails to be a gapset filtration if and only $F$ is one of
$$
(123)^a(13)^b(3)^{a}\ \text{or}\ (123)^a(23)^b(3)^{a}.
$$
In this case we have $g=3a+2b+a=4a+2b$, which is even. We conclude that whenever $g$ is \emph{odd}, then $f_3$ yields a well-defined injection 
$$
\fc(g,4) \longrightarrow \fc(g+1,4).
$$
This concludes the proof of the corollary.
\end{proof}

\subsection{Concluding remark}
We have shown that for $m=3$ and $4$, an injection $\fc(g,m) \longrightarrow \fc(g+1,m)$ is provided by one of the insertion maps $f_i$, where $i \in [1,m-1]$ depends on the class of $g$ modulo $3$ and $2$, respectively. 

Unfortunately, for any given $m \ge 5$, this is no longer true in general. That is, one should not expect that for each $g \ge 1$, an injection $\fc(g,m) \longrightarrow \fc(g+1,m)$ will be provided by just one of the insertion maps $f_i$. Constructing such injections for all $m,g$ remains open at the time of writing.



\begin{thebibliography}{aaa}
\bibitem{Br08} \textsc{M. Bras-Amor\'os}, Fibonacci-like behavior of the number of numerical semigroups of a given genus,  Semigroup Forum 76  (2008) 379--384.
\bibitem{Br09} \textsc{M. Bras-Amor\'os}, Bounds on the number of numerical semigroups of a given genus, J. Pure Appl. Algebra 213 (2009) 997--1001.
\bibitem{cal} \textsc{CALCULCO}, a high performance computing platform supported by SCoSI/ULCO (Service COmmun du Syst\`eme d'Information de l'Universit\'e du Littoral C\^ote d'Opale).
\bibitem{D} \textsc{M. Delgado}, On a question of Eliahou and a conjecture of Wilf. Math. Z. 288 (2018) 595--627.
\bibitem{DG} \textsc{M. Delgado, P.A. Garc\'{\i}a-S\'anchez and J. Morais}, ``Numericalsgps'': a GAP package on numerical semigroups. \url{http://www.gap-system.org/Packages/numericalsgps.html}
\bibitem{E} {\sc S. Eliahou}, Wilf's conjecture and Macaulay's theorem, J. Eur. Math. Soc. 20 (2018) 2105--2129. DOI 10.4171/JEMS/807.
\bibitem{EF} {\sc S. Eliahou and J. Fromentin}, Near-misses in Wilf's conjecture, Semigroup Forum (2018). DOI: 10.1007/s00233-018-9926-5.
\bibitem{EF2} {\sc S. Eliahou and J. Fromentin}, Gapsets and Numerical Smigroups, preprint, arXiv:1811.10295.
\bibitem{FGH} \textsc{R. Fr\"oberg, C. Gottlieb and R. H\"aggkvist}, On numerical semigroups, Semigroup Forum 35 (1987) 63--83.
\bibitem{FH} \textsc{J. Fromentin and F. Hivert}, Exploring the tree of numerical semigroups, Math. Comp.  85 (2016) 2553--2568.
\bibitem{GMR} \textsc{P. A. Garc\'{\i}a-S\'anchez, D. Mar\'{i}n-Arag\'on and A. M. Robles-P\'erez}, The tree of numerical semigroups with low multiplicity, arXiv:1803.06879 [math.CO] (2018).
\bibitem{IMNS} \url{https://www.ugr.es/~imns2010/2018/}
\bibitem{K1} \textsc{N. Kaplan}, Counting numerical semigroups by genus and some cases of a question of Wilf, J. Pure Appl. Algebra 216 (2012) 1016--1032.
\bibitem{K2} \textsc{N. Kaplan}, Counting numerical semigroups.
 Amer. Math. Monthly 124 (2017) 862--875.
\bibitem{MS} \textsc{A. Moscariello and A. Sammartano}, On a conjecture by Wilf about the Frobenius number, Math. Z. 280 (2015) 47--53. 
\bibitem{R} \textsc{J.L. Ram\'\i{}rez Alfons\'\i{}n}, The Diophantine Frobenius problem. Oxford Lecture Series in Mathematics and its Applications 30, Oxford University Press, Oxford, 2005.
\bibitem{RG}  \textsc{J.C. Rosales and P.A. Garc\'\i{}a-S\'anchez},  Numerical semigroups. Developments in Mathematics, 20. Springer, New York, 2009.
\bibitem{RGGJ1} \textsc{J.C. Rosales, P.A. Garc\'\i{}a-S\'anchez, J.I. Garc\'\i{}a-Garc\'\i{}a and J.A. Jim\'enez Madrid}, The oversemigroups of a numerical semigroup, Semigroup Forum 67 (2003) 145--158.
\bibitem{RGGJ2} \textsc{J.C. Rosales, P.A. Garc\'\i{}a-S\'anchez, J.I. Garc\'\i{}a-Garc\'\i{}a and J.A. Jim\'enez Madrid}, Fundamental gaps in numerical semigroups, J. Pure Appl. Algebra 189  (2004) 301--313.
\bibitem{Se} \textsc{E.S. Selmer}, On a linear Diophantine problem of Frobenius, J. Reine Angew. Math. 293/294 (1977) 1--17.
\bibitem{Sy} \textsc{J.J. Sylvester}, Mathematical questions with their solutions, Educational Times 41 (1884) 21.
\bibitem{wiki} \url{https://en.wikipedia.org/wiki/Generalizations_of_Fibonacci_numbers#Tribonacci_numbers}
\bibitem{W} \textsc{H. Wilf}, A circle-of-lights algorithm for the money-changing problem, Amer. Math. Monthly 85 (1978) 562--565.
\bibitem{Z} \textsc{A. Zhai}, Fibonacci-like growth of numerical semigroups of a given genus, Semigroup Forum 86 (2013) 634--662. 
\bibitem{Zhao} \textsc{Y. Zhao}, Constructing numerical semigroups of a given genus, Semigroup Forum 80 (2010) 242--254. 
\url{http://dx.doi.org/10.1007/s00233-009-9190-9}.
\end{thebibliography}
\end{document}